\newtheorem{theorem}{Theorem}[section]
\newtheorem{lemma}[theorem]{Lemma}
\newtheorem{proposition}[theorem]{Proposition}
\newtheorem{corollary}[theorem]{Corollary}
\theoremstyle{definition}
\theoremstyle{remark}
\newtheorem{remark}[theorem]{Remark}
\numberwithin{equation}{section}
\def\Tr{\mathop{\rm Tr}}
\begin{document}
\setcounter{page}{1}

\title[Inequalities for arithmetic mean and harmonic mean]{Matrix inequalities for the difference between arithmetic mean and harmonic mean}

\author[W.S. Liao, J.L. Wu]{Wenshi Liao$^1$$^{*}$ and Junliang Wu$^2$}

\address{$^{1}$ College of Mathematics and Statistics, Chongqing University, Chongqing, 401331, P.R. China.}
\email{\textcolor[rgb]{0.00,0.00,0.84}{liaowenshi@gmail.com}}

\address{$^{2}$ College of Mathematics and Statistics, Chongqing University, Chongqing, 401331, P.R. China.}
\email{\textcolor[rgb]{0.00,0.00,0.84}{jlwu678@163.com}}


\subjclass[2010]{Primary 15A45; Secondary 15A60, 26E60.}

\keywords{Arithmetic mean, harmonic mean, matrix inequalities, positive definite matrices.}

\date{Received: xxxxxx; Revised: yyyyyy; Accepted: zzzzzz.
\newline \indent $^{*}$ Corresponding author}

\begin{abstract}
Motivated by the refinements and reverses of arithmetic-geometric mean and arithmetic-harmonic mean inequalities for scalars and matrices, in this article, we generalize the scalar and matrix inequalities for the difference between arithmetic mean and harmonic mean. In addition, relevant inequalities for the Hilbert-Schmidt norm and determinant are established.
\end{abstract} \maketitle

\section{Introduction}
Let $M_n(\mathbb{C})$ be the space of $n\times n$ complex matrices. $I$ stands for the identity matrix. The Hilbert-Schmidt norm ($l_2$ norm, Frobenius norm or Schur norm) of $A=[a_{ij}]\in M_n(\mathbb{C})$ is defined by
\[\left\| A\right\|_F^2=\left(\sum\limits_{i=1}^n\sum\limits_{j=1}^n |a_{ij}|^2\right)^{1/2}=\left(\Tr|A|^2\right)^{1/2}=\left(\sum\limits_{i=1}^ns_i^2(A)\right)^{1/2},\]
where $\Tr$ is the trace functional, $|A|=(A^*A)^{1/2}$ and $s_1(A)\ge s_2(A)\ge\cdots\ge s_n(A)$ denote the singular values
 of $A$ (the eigenvalues of positive semi-definite matrix $|A|$) arranged in decreasing order and repeated according
  to multiplicity (see \cite[p.341-342]{Horn}). It is well-known that each unitarily invariant norm is a symmetric guage function of singular values
  \cite[p.91]{Bhatia}, so the Hilbert-Schmidt norm is unitarily invariant.

For $a,b>0$, $v\in[0,1]$ and $t\in \mathbb{R}$, the power mean
\[
M_t(v;a,b)=\left(va^t+(1-v)b^t\right)^{1/t}, t\neq 0,
\]
makes a path of means from the harmonic mean at $t=-1$ to the arithmetic mean $t=1$ via the geometric mean at $t\rightarrow 0$ and $M_0(v;a,b)=\lim\limits_{t\rightarrow 0}M_t(v;a,b)$. If $s\le t$, then $M_s(v;a,b)\le M_t(v;a,b)$ and the two means are equal if and only if $a=b$ (see \cite[p.194-196]{Roberts}). So
\[
M_{-1}(v;a,b)\le M_0(v;a,b) \le M_1(v;a,b),
\]
that is,
\begin{equation}
\label{in11}
\left(va^{-1}+(1-v)b^{-1}\right)^{-1}\le a^vb^{1-v}\le va+(1-v)b.
\end{equation}
Note that this is the classical arithmetic-geometric-harmonic mean inequalities and it's worthwhile to mention that the second one is the Young inequality, for more details about the refinements and reverses of the Young inequality, the reader is referred to \cite{Alzer, Furuichi, Kittaneh, Zuo}.

The following is a noncommutative matrices version of the arithmetic-geometric-harmonic mean inequalities which is the main result from \cite{Ando1, Ando2} (Also see \cite{Sagae}):
For positive definite matrices $A,B\in M_n(\mathbb{C})$ and $0\le v\le 1$,
\begin{equation}
\label{in1}
\left(vA^{-1}+(1-v)B^{-1}\right)^{-1}\le A^{\frac{1}{2}}(A^{-\frac{1}{2}}BA^{-\frac{1}{2}})^v A^{\frac{1}{2}}\le vA+(1-v)B.
\end{equation}
For convenience, we use the following
notations to define the weighted arithmetic mean, geometric mean and harmonic mean for scalars and matrices:
\[
A_v(a,b)=va+(1-v)b,~
H_v(a,b)=\left(va^{-1}+(1-v) b^{-1}\right)^{-1},
\]
\[
A\nabla _v B=vA+(1-v) B, A\# _v B=A^{\frac{1}{2}}(A^{-\frac{1}{2}}BA^{-\frac{1}{2}})^v A^{\frac{1}{2}},
\]
\[
 A!_v B=\left( {vA^{-1}+(1-v) B^{-1}} \right)^{-1},
\]
where $a,b>0$, $0\le v\le 1$ and $A,B\in M_n(\mathbb{C})$ are positive definite matrices. When $v =\frac{1}{2}$, we write $A(a,b)$, $H(a,b)$, $A\nabla B$, $A\#B$ and $A!B$ for brevity, respectively. The above notations and definitions will be valid throughout the whole paper.

It is evident that the full matrix algebra of all $n\times n$ matrices with entries in the complex field is the finite-dimensional
 case of the $C^*$-algebra of all bounded linear operators on a complex separable Hilbert space.
If one inequality is valid for positive invertible operators, so is valid for positive definite matrices.

Motivated by Furuichi's refinement of the Young inequality for positive invertible operators $A,B$ and $v \in [0,1]$ (see \cite{Furuichi})
\begin{equation}
\label{in2}
A\nabla _v B\ge A\#_v B+2\min \left\{ {v ,1-v } \right\}\left(A\nabla B-A\#B\right),
\end{equation}
and Kittaneh and Manasrh's reverse Young inequality for two positive definite matrices $A,B$ and $v \in [0,1]$ (see \cite{Kittaneh})
\begin{equation}
\label{in3}
A\nabla _v B\le A\#_v B+2\max \left\{ {v ,1-v } \right\}\left(A\nabla B-A\#B\right),
\end{equation}
Hirzallah et al. \cite{Hirzallah1} generalized the inequality \eqref{in2} and \eqref{in3}: For positive invertible operators $A,B$ and ${\rm\textbf{p}}=(p_1, p_2)\in \mathbb{R}_+^2$,
if $A\ge B,0<p_1\le p_2$ or $A\le B,0<p_2\le p_1$, then
\begin{equation}
\label{in4}
A\nabla _{\frac{p_1}{p_1+p_2}} B\ge A\#_{\frac{p_1}{p_1+p_2}} B+\frac{4p_1p_2}{(p_1+p_2)^2}\left(A\nabla B-A\#B\right),
\end{equation}
in addition, if $A\ge B,0<p_2\le p_1$ or $A\le B,0<p_1\le p_2$, then the inequality \eqref{in4} is reversed.

Zuo et al.\cite{Zuo} refined the weighted
arithmetic-harmonic mean inequality and extended it to two invertible positive operator $A,B$ as follows:
\begin{equation}
\label{zuo1}
A_v(a,b)\ge H_v(a,b)+ 2\min \left\{ {v ,1-v } \right\}\left(A(a,b)-H(a,b)\right),
\end{equation}
\begin{equation}
\label{zuo2}
A\nabla _v B\ge A!_v B+2\min \left\{ {v ,1-v } \right\}\left(A\nabla B-A!B\right).
\end{equation}
Krni\'{c} et al.\cite{Krni} presented a reverse of the inequality \eqref{zuo2}
\begin{equation}
\label{kr1}
A\nabla _v B\le A!_v B+2\max \{1-v ,v \}(A\nabla B-A!B).
\end{equation}
Apparently, the following scalars inequality is also valid:
\begin{equation}
\label{kr2}
A_v(a,b)\le H_v(a,b)+2\max \{1-v ,v \}\left(A(a,b)-H(a,b)\right).
\end{equation}

Our main task is to improve the scalar and matrix inequalities of the difference between arithmetic mean and harmonic mean.
This article is organized in the following way: in Section 2, we derive several new weighted arithmetic-harmonic mean inequality.
In Section 3, we extend inequalities proved in Section 2 from the scalars setting to a matrix-algebra setting.
 In Section 4 and 5, the Hilbert-Schmidt norm and determinant inequalities for positive definite matrices are established.

\section{Inequalities of the difference between arithmetic mean and harmonic mean}
The first theorem is our main result about the difference between arithmetic and harmonic mean which extends the inequalities \eqref{zuo1} and \eqref{kr2}.
\begin{theorem}
Let $v$, $\tau$ and $\lambda$ be real numbers with $0<v<\tau<1$ and $\lambda\ge1$. Then
\begin{equation}
\label{th21}
\left(\frac{v}{\tau}\right)^{\lambda}<M_{v,\tau;\lambda}(a,b)<\left(\frac{1-v}{1-\tau}\right)^{\lambda}
\end{equation}
holds for all positive and distinct real numbers $a$ and $b$, where $M_{v,\tau;\lambda}(a,b)=\frac{A_v(a,b)^{\lambda}-H_v(a,b)^{\lambda}}{A_\tau(a,b)^{\lambda}-H_\tau(a,b)^{\lambda}}$.
Moreover, $\lim\limits_{a\rightarrow 0}M_{v,\tau;\lambda}(a,b)=\left(\frac{1-v}{1-\tau}\right)^{\lambda}$ and $\lim\limits_{a\rightarrow \infty}M_{v,\tau;\lambda}(a,b)=\left(\frac{v}{\tau}\right)^{\lambda}$.
\end{theorem}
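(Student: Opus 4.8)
The plan is to use homogeneity to read off the limits and then recast the two-sided bound as a pair of monotonicity statements in the weight. Since $A_w(a,b)$ and $H_w(a,b)$ are positively homogeneous of degree one, $M_{v,\tau;\lambda}(a,b)$ is homogeneous of degree zero and depends only on $a/b$; moreover one has the identity $H_w(a,b)=ab/A_{1-w}(a,b)$. Letting $a\to0$ with $b$ fixed gives $A_w\to(1-w)b$ and $H_w\to0$, whence $M_{v,\tau;\lambda}\to\big((1-v)b\big)^{\lambda}/\big((1-\tau)b\big)^{\lambda}=\left(\frac{1-v}{1-\tau}\right)^{\lambda}$, while letting $a\to\infty$ gives $A_w\sim wa$ with $H_w\to b/(1-w)$ bounded, so $M_{v,\tau;\lambda}\to(va)^{\lambda}/(\tau a)^{\lambda}=\left(\frac{v}{\tau}\right)^{\lambda}$. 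For the strict bounds, fix distinct $a,b$ and set $F(w)=A_w(a,b)^{\lambda}-H_w(a,b)^{\lambda}$. Because $A_w>H_w>0$ on $(0,1)$ and $x\mapsto x^{\lambda}$ is increasing, $F>0$ there, so the denominator of $M_{v,\tau;\lambda}=F(v)/F(\tau)$ is positive. Cross-multiplying, the lower bound will follow from $P(w):=F(w)/w^{\lambda}$ being strictly decreasing and the upper bound from $Q(w):=F(w)/(1-w)^{\lambda}$ being strictly increasing on $(0,1)$.

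I would then halve the work by symmetry. From $A_{1-w}(b,a)=A_w(a,b)$ and $H_{1-w}(b,a)=H_w(a,b)$ one gets $F(w;a,b)=F(1-w;b,a)$, and a short chain-rule computation gives
\[
(1-w)F'(w)+\lambda F(w)=-\big[uF'(u)-\lambda F(u)\big]_{u=1-w},
\]
where on the right the pair $(a,b)$ is replaced by $(b,a)$. Thus $Q$ is increasing for every ordered pair exactly when $P$ is decreasing for every ordered pair, and it suffices to prove the single inequality $wF'(w)<\lambda F(w)$ on $(0,1)$.

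To establish this, write $\alpha=A_w(a,b)$ and $\beta=A_{1-w}(a,b)$, so that $F(w)=\alpha^{\lambda}-(ab)^{\lambda}\beta^{-\lambda}$ and
\[
F'(w)=\lambda(a-b)\big[\alpha^{\lambda-1}-(ab)^{\lambda}\beta^{-\lambda-1}\big].
\]
Inserting the elementary identities $w(a-b)=\alpha-b=a-\beta$ into $wF'(w)<\lambda F(w)$ and clearing the positive factor $\beta^{\lambda+1}$, the inequality collapses to
\[
b\,\alpha^{\lambda-1}\beta^{\lambda+1}+a(ab)^{\lambda}>2(ab)^{\lambda}\beta.
\]
Here I would factor $\alpha^{\lambda-1}\beta^{\lambda+1}=(\alpha\beta)^{\lambda-1}\beta^{2}$ and use $\alpha\beta=A_w A_{1-w}\ge ab$ (the difference being $w(1-w)(a-b)^2\ge0$) together with $\lambda-1\ge0$ to obtain $b\,\alpha^{\lambda-1}\beta^{\lambda+1}\ge(ab)^{\lambda-1}b\beta^{2}$; the proof then closes via the perfect square
\[
(ab)^{\lambda-1}b\beta^{2}-(ab)^{\lambda}(2\beta-a)=(ab)^{\lambda-1}b(\beta-a)^{2}>0,
\]
which is strict since $\beta=A_{1-w}\neq a$ when $a\neq b$.

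The main obstacle is the free exponent $\lambda$: the difference $A_w^{\lambda}-H_w^{\lambda}$ does not factor, and bounding $F'(w)/F(w)$ head-on produces a quantity of indefinite sign. The device that overcomes this is exactly the split $\alpha^{\lambda-1}\beta^{\lambda+1}=(\alpha\beta)^{\lambda-1}\beta^{2}$ combined with $A_wA_{1-w}\ge ab$, which concentrates all the $\lambda$-dependence into a single factor that is $\ge1$ precisely when $\lambda\ge1$ and leaves a perfect-square remainder; this is where the hypothesis $\lambda\ge1$ enters, and where the distinctness of $a$ and $b$ renders every estimate strict, yielding the open two-sided bound with the two limits showing its sharpness.
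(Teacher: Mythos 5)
Your proposal is correct, and at the strategic level it coincides with the paper's proof: both arguments reduce the two-sided bound to showing that $F(w)/w^{\lambda}$ is strictly decreasing and $F(w)/(1-w)^{\lambda}$ is strictly increasing in the weight $w$, and both obtain sharpness from the same two limits $a\to 0$ and $a\to\infty$. The execution differs in two ways that are worth noting. First, the paper carries out two parallel derivative computations (one for each normalized difference, called $F$ and $G$ there), whereas you prove only the single inequality $wF'(w)<\lambda F(w)$ and deduce the second monotonicity statement from the symmetry $F(w;a,b)=F(1-w;b,a)$ — a genuine economy that halves the work. Second, the paper differentiates with $H_v$ kept intact and uses $\left(A_v/H_v\right)^{\lambda-1}\ge 1$ (valid since $\lambda\ge 1$) to reduce the derivative to a negative perfect-square expression, while you substitute $H_w=ab/A_{1-w}$ first and reduce the monotonicity to the polynomial inequality $b\,\alpha^{\lambda-1}\beta^{\lambda+1}+a(ab)^{\lambda}>2(ab)^{\lambda}\beta$ with $\alpha=A_w$, $\beta=A_{1-w}$, closed by $\left(\alpha\beta\right)^{\lambda-1}\ge (ab)^{\lambda-1}$ and the square $(ab)^{\lambda-1}b(\beta-a)^2>0$. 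These are in fact the same estimate in disguise — since $H_w=ab/A_{1-w}$, one has $A_w/H_w=\alpha\beta/(ab)$, so your condition $\alpha\beta\ge ab$ is literally the paper's $(A_v/H_v)^{\lambda-1}\ge1$ — but your algebraic organization avoids manipulating $H_v$ under differentiation and makes both the role of $\lambda\ge1$ and the source of strictness (the factor $(\beta-a)^2$ with $\beta\ne a$ when $a\ne b$) completely transparent.
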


\begin{proof}
Let $\lambda>0$, $0<v<\tau<1$ and $0<x\neq 1$. We define
\begin{align*}
F(v;\lambda;x)&=\frac{A_v(x,1)^{\lambda}-H_v(x,1)^{\lambda}}{v^{\lambda}}
=\left(x+\frac{1}{v}-1\right)^{\lambda}-\left(\frac{v^2}{x}+v-v^2\right)^{-\lambda},\\
G(v;\lambda;x)&=\frac{A_v(x,1)^{\lambda}-H_v(x,1)^{\lambda}}{(1-v)^{\lambda}}=\left(\frac{v x}{1-v}+1\right)^{\lambda}-\left(\frac{(1-v)v}{x}+(1-v)^2\right)^{-\lambda}.
\end{align*}

The function $F(v)=F(v;\lambda;x)$ on $(0,1)$ is differentiable. Its partial differentiation at $v$ can be expressed as
\begin{align*}
\frac{\partial}{\partial v}F(v;\lambda;x)&=
\frac{\lambda}{v^{\lambda+1}H_v(x,1)^{1-\lambda}}\left[H_v(x,1)^2\left(\frac{2v}{x}+1-2v\right)-\left(\frac{A_v(x,1)}{H_v(x,1)}\right)^{\lambda-1}\right]\\
&\le \frac{\lambda}{v^{\lambda+1}H_v(x,1)^{1-\lambda}}\left[H_v(x,1)^2\left(\frac{2v}{x}+1-2v\right)-1\right]\\
&=\frac{\lambda}{v^{\lambda+1}H_v(x,1)^{1-\lambda}}\cdot\frac{-v^2\left(x^{-1}-1\right)^2}{\left(vx^{-1}+(1-v)\right)^2}\\
&<0.
\end{align*}
This implies that $F(v;\lambda;x)$ is strictly decreasing with respect to $v$. Hence, for $0<v<\tau<1$, we obtain
\begin{equation}
\label{th212}
\frac{A_\tau(x,1)^{\lambda}-H_\tau(x,1)^{\lambda}}{\tau^{\lambda}}<\frac{A_v(x,1)^{\lambda}-H_v(x,1)^{\lambda}}{v^{\lambda}}.
\end{equation}

The partial differentiation of $G(v;\lambda;x)$ at $v$ yields

\begin{align*}
&\frac{\partial}{\partial v}G(v;\lambda;x)\\
&=\frac{\lambda x}{(1-v)^{\lambda+1}H_v(x,1)^{1-\lambda}}\left[\left(\frac{A_v(x,1)}{H_v(x,1)}\right)^{\lambda-1}+H_v(x,1)^2\left(\frac{1-2v}{x^2}+\frac{2v-2}{x}\right)\right]\\
&\ge\frac{\lambda x}{(1-v)^{\lambda+1}H_v(x,1)^{1-\lambda}}\left[1+H_v(x,1)^2\left(\frac{1-2v}{x^2}+\frac{2v-2}{x}\right)\right]\\
&=\frac{\lambda x}{(1-v)^{\lambda+1}H_v(x,1)^{1-\lambda}}\cdot\frac{(1-v)^2\left(x^{-1}-1\right)^2}{\left(vx^{-1}+(1-v)\right)^2}\\
&>0.
\end{align*}
Thus, $G(v;\lambda;x)$ is strictly increasing with respect to $v$. For $0<v<\tau<1$, we obtain
\begin{equation}
\label{th213}
\frac{A_\tau(x,1)^{\lambda}-H_\tau(x,1)^{\lambda}}{(1-\tau)^{\lambda}}<\frac{A_v(x,1)^{\lambda}-H_v(x,1)^{\lambda}}{(1-v)^{\lambda}}.
\end{equation}
Next, let $x=a/b$ in \eqref{th212} and \eqref{th213} and multiplying both sides by $b^{\lambda}$, then we obtain \eqref{th21}.

Further, we have
\begin{align*}
M_{v,\tau;\lambda}(a,b)&=\frac{A_v(a,b)^{\lambda}-H_v(a,b)^{\lambda}}{A_\tau(a,b)^{\lambda}-H_\tau(a,b)^{\lambda}}\\
&=\frac{(va+(1-v)b)^{\lambda}-\left(va^{-1}+(1-v)b^{-1}\right)^{-\lambda}}{(\tau a+(1-\tau )b)^{\lambda}-\left(\tau a^{-1}+(1-\tau )b^{-1}\right)^{-\lambda}}\\
&=\frac{(va+(1-v)b)^{\lambda}-\left[ab(vb+(1-v)a)^{-1}\right]^{\lambda}}{(\tau a+(1-\tau )b)^{\lambda}-\left[ab(\tau b+(1-\tau)a)^{-1}\right]^{\lambda}} ,
\end{align*}
which satisfies
\begin{equation}
\label{th214}
\lim\limits_{a\rightarrow 0}M_{v,\tau;\lambda}(a,b)=\frac{((1-v)b)^{\lambda}-0}{((1-\tau)b)^{\lambda}-0}=\left(\frac{1-v}{1-\tau}\right)^{\lambda},
\end{equation}
and the representation
\[
M_{v,\tau;\lambda}(a,b)=\frac{A_v(1,\frac{b}{a})^{\lambda}-H_v(1,\frac{b}{a})^{\lambda}}{A_\tau(1,\frac{b}{a})^{\lambda}-H_\tau(1,\frac{b}{a})^{\lambda}}
\]
leads to
\begin{equation}
\label{th215}
\lim\limits_{a\rightarrow \infty}M_{v,\tau;\lambda}(a,b)=\frac{v^{\lambda}-0}{\tau^{\lambda}-0}=\left(\frac{v}{\tau}\right)^{\lambda}.
\end{equation}
The limit relations \eqref{th214} and \eqref{th215} reveal that the upper and lower bounds given in \eqref{th21} are sharp.
\end{proof}

\begin{remark}
If $\lambda=1$ and $\tau=1/2$ in the inequalities \eqref{th21}, then
\[
2v(A(a,b)- H(a,b))\le A_v(a,b)- H_v(a,b)\le 2(1-v)(A(a,b)- H(a,b)),
\]
which is equivalent to \eqref{zuo1} and \eqref{kr2}, respectively.

If $\lambda=2$ and $\tau=1/2$ in the inequalities \eqref{th21}, then
\begin{equation}
\label{rem21}
\begin{split}
4v^2(A(a,b)^2- H(a,b)^2)&\le A_v(a,b)^2- H_v(a,b)^2\\
&\le 4(1-v)^2(A(a,b)^2- H(a,b)^2).
\end{split}
\end{equation}
\end{remark}

Next, we establish another type of upper and lower bounds for the difference of $A_v(a,b)$ and $H_v(a,b)$. We need the following lemma (see \cite{Alzer}).
\begin{lemma}\label{lemma21}
Let $v\in(0,1)$ and $f:[a,b]\rightarrow \mathbb{R}$ be twice differentiable with $-\infty <m\le f''(x)\le M<+\infty$ for all $x\in (a,b)$. Then
\begin{align*}
\frac{v(1-v)}{2}(b-a)^2m&\le vf(a)+(1-v)f(b)-f(va+(1-v)b)\\
&\le\frac{v(1-v)}{2}(b-a)^2M.
\end{align*}
The factor $v(1-v)$ is the best possible.
\end{lemma}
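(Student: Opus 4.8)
The plan is to reduce the whole statement to the single fact that a convex function has a nonnegative two-point Jensen gap, applied to a quadratic shift of $f$. First I would introduce the auxiliary function $g(x)=f(x)-\frac{m}{2}x^2$. Since $g''(x)=f''(x)-m\ge 0$ on $(a,b)$, the function $g$ is convex, so the two-point Jensen inequality gives
\[
g(va+(1-v)b)\le vg(a)+(1-v)g(b).
\]
Substituting the definition of $g$ and rearranging isolates the desired gap on one side and a purely quadratic remainder on the other,
\[
vf(a)+(1-v)f(b)-f(va+(1-v)b)\ge \frac{m}{2}\bigl[va^2+(1-v)b^2-(va+(1-v)b)^2\bigr].
\]

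The next step is an elementary algebraic identity: expanding the square and collecting terms shows that the bracketed quantity equals exactly $v(1-v)(b-a)^2$, which turns the previous line into the claimed lower bound. For the upper bound I would repeat the argument verbatim with $h(x)=\frac{M}{2}x^2-f(x)$, whose second derivative $M-f''(x)$ is nonnegative, so $h$ is convex; the same computation then yields
\[
vf(a)+(1-v)f(b)-f(va+(1-v)b)\le \frac{M}{2}v(1-v)(b-a)^2.
\]

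To confirm that the factor $v(1-v)$ is best possible, I would exhibit an extremal function. Choosing $f(x)=x^2$ forces $m=M=2$, and a direct substitution shows that the middle expression equals $v(1-v)(b-a)^2$ while both outer bounds reduce to the same value; thus both inequalities become equalities simultaneously, so neither bound can be improved.

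I do not expect a genuine obstacle here, since the lemma is essentially a repackaging of convexity. The only points demanding care are the sign bookkeeping when passing from the convex function $g$, which produces the lower bound, to the convex function $h$, which produces the upper bound, together with the routine expansion of $va^2+(1-v)b^2-(va+(1-v)b)^2$ into $v(1-v)(b-a)^2$.
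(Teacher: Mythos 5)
Your proof is correct and complete: the quadratic shifts $g(x)=f(x)-\frac{m}{2}x^2$ and $h(x)=\frac{M}{2}x^2-f(x)$ are convex by hypothesis, the two-point Jensen inequality applies, the algebraic identity $va^2+(1-v)b^2-(va+(1-v)b)^2=v(1-v)(b-a)^2$ is right, and $f(x)=x^2$ (with $m=M=2$) turns both inequalities into equalities, which settles sharpness of the factor $v(1-v)$. Note, however, that the paper itself gives no proof to compare against: the lemma is imported verbatim from the reference of Alzer, da Fonseca and Kova\v{c}ec with the citation ``(see \cite{Alzer})''. Your argument is the standard one for results of this type (a Jensen gap sandwiched by the gaps of the extremal quadratics), so you have in effect supplied the self-contained proof that the paper outsources; there is nothing in the paper's treatment that your route misses, and no gap in your reasoning.
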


\begin{theorem}\label{theorem22}
Let $v\in(0,1)$ and $a,b>0$ with $a<b$. Then we have
\begin{equation}
\label{th22}
v(1-v)\left(1-\frac{a}{b}\right)^2a\le A_v(a,b)-H_v(a,b)\le v(1-v)\left(1-\frac{b}{a}\right)^2b.
\end{equation}
\end{theorem}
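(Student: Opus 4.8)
The plan is to sidestep Lemma~\ref{lemma21} and instead evaluate the difference exactly, exploiting the closed form $H_v(a,b)=ab/(vb+(1-v)a)$. First I would place $A_v(a,b)-H_v(a,b)$ over the common denominator $vb+(1-v)a$, giving
\[
A_v(a,b)-H_v(a,b)=\frac{(va+(1-v)b)(vb+(1-v)a)-ab}{vb+(1-v)a}.
\]
Expanding the product, the coefficient of $ab$ is $v^2+(1-v)^2-1=-2v(1-v)$ while $a^2$ and $b^2$ each appear with weight $v(1-v)$, so the numerator collapses to $v(1-v)(a-b)^2$ and I obtain the exact identity
\[
A_v(a,b)-H_v(a,b)=\frac{v(1-v)(b-a)^2}{vb+(1-v)a}.
\]

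With this identity the bounds are almost immediate. Since $vb+(1-v)a$ is a convex combination of $a$ and $b$ and $a<b$, $0<v<1$, it lies strictly between $a$ and $b$; replacing it by $b$ and by $a$ yields the sharp estimate
\[
\frac{v(1-v)(b-a)^2}{b}\le A_v(a,b)-H_v(a,b)\le\frac{v(1-v)(b-a)^2}{a}.
\]
To finish I would note that the two quantities in \eqref{th22} are weaker than these: $v(1-v)(1-\frac{a}{b})^2a=v(1-v)(b-a)^2\frac{a}{b^2}\le v(1-v)(b-a)^2\frac{1}{b}$ and $v(1-v)(b-a)^2\frac{1}{a}\le v(1-v)(b-a)^2\frac{b}{a^2}=v(1-v)(1-\frac{b}{a})^2b$, both following from $a\le b$. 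Chaining these comparisons with the sharp estimate proves \eqref{th22}.

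The step I expect to cause trouble is the temptation to prove \eqref{th22} through Lemma~\ref{lemma21} directly. The natural choice there is $f(x)=1/x$, with $f''(x)=2/x^3$ so that $m=2/b^3$ and $M=2/a^3$; but the lemma then controls only $\frac{1}{H_v(a,b)}-\frac{1}{A_v(a,b)}=\frac{A_v(a,b)-H_v(a,b)}{A_v(a,b)H_v(a,b)}$, and passing to $A_v(a,b)-H_v(a,b)$ forces a comparison of $A_v(a,b)H_v(a,b)$ with $ab$. A short computation shows $A_v(a,b)H_v(a,b)\ge ab$ exactly when $v\le\frac12$, with the reverse for $v\ge\frac12$, so neither bound in \eqref{th22} can be extracted from Lemma~\ref{lemma21} uniformly in $v$. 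This is precisely why I would compute the difference in closed form rather than route through the lemma.
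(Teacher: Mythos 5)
Your proof is correct, and it takes a genuinely different and more elementary route than the paper's. The paper does prove \eqref{th22} through Lemma~\ref{lemma21} with $f(x)=1/x$, but it sidesteps exactly the obstruction you identify: instead of bounding $\frac{1}{H_v}-\frac{1}{A_v}$ at the points $a,b$ and then dividing by $A_vH_v$, it applies the lemma at the reciprocal points $b^{-1}<a^{-1}$, where the middle quantity $vb+(1-v)a-\left(vb^{-1}+(1-v)a^{-1}\right)^{-1}$ is already an arithmetic-minus-harmonic difference, and then swaps $v\leftrightarrow 1-v$; since $f''(x)=2/x^3$ ranges over $[2a^3,2b^3]$ on $(b^{-1},a^{-1})$ and $(a^{-1}-b^{-1})^2a^3=\left(1-\frac{a}{b}\right)^2a$, $(a^{-1}-b^{-1})^2b^3=\left(1-\frac{b}{a}\right)^2b$, this yields \eqref{th22} directly. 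So your closing claim that neither bound in \eqref{th22} can be extracted from Lemma~\ref{lemma21} is mistaken --- the sign flip of $A_vH_v-ab$ at $v=\tfrac12$, which you correctly compute, defeats only the naive application, not the reciprocal substitution --- though this misjudgment has no bearing on the validity of your own argument. Your route instead rests on the exact identity
\[
A_v(a,b)-H_v(a,b)=\frac{v(1-v)(b-a)^2}{vb+(1-v)a},
\]
whose numerator expansion checks out, together with the trivial enclosure $a<vb+(1-v)a<b$. This buys something the paper's proof does not: the intermediate bounds $v(1-v)(b-a)^2/b\le A_v(a,b)-H_v(a,b)\le v(1-v)(b-a)^2/a$ are strictly sharper than \eqref{th22}, from which \eqref{th22} follows by the correct weakenings $a/b^2\le 1/b$ and $1/a\le b/a^2$; moreover the sharpness of the factor $v(1-v)$ as $b/a\to1$, which the paper establishes by a separate limit computation with $g_v(t)$, is immediate from the identity, since the denominator tends to the common value of $a$ and $b$. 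What the lemma-based route buys in exchange is generality: Lemma~\ref{lemma21} is a template applicable to any twice-differentiable $f$ with bounded second derivative, whereas your closed form is special to $f(x)=1/x$.
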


\begin{proof}
If we take $f(x)=1/x$ in Lemma \ref{lemma21}, then we have
\begin{align*}
v(1-v)\left(b-a\right)^2\frac{1}{b^3}&\le va^{-1}+(1-v)b^{-1}-(va+(1-v)b)^{-1}\\
&\le v(1-v)\left(b-a\right)^2\frac{1}{a^3}.
\end{align*}
Replace $a$ by $b^{-1}$ and $b$ by $a^{-1}$ in the above inequalities, respectively,
\begin{align*}
v(1-v)\left(b^{-1}-a^{-1}\right)^2a^3&\le vb+(1-v)a-(vb^{-1}+(1-v)a^{-1})^{-1}\\
&\le v(1-v)\left(b^{-1}-a^{-1}\right)^2b^3,
\end{align*}
then we can obtain the desired result by replacing $v$ by $1-v$.

Setting $t=b/a>1$, then \eqref{th22} leads to
\[
v(1-v)\le g_v(t)\le v(1-v)\frac{1}{t},
\]
where
\[
g_v(t)=\frac{v+(1-v)t-(v+(1-v)t^{-1})^{-1}}{\left(1-t\right)^2}.
\]
Since $\lim\limits_{t\rightarrow 1}g_v(t)=v(1-v),$ the limit reveals that the factor $v(1-v)$ is sharp.
\end{proof}

\section{Matrix versions of the difference between arithmetic mean and harmonic mean }
In this section, we begin with the following lemma which is based on the inequalities \eqref{th21}
and the spectral theorem for Hermitian matrices.
\begin{lemma}\label{lemma31}
Let $Q\in M_n(\mathbb{C})$ be positive definite and $v$, $\tau$ be real numbers with $0<v\le \tau<1$. Then
\begin{equation}
\label{lm31}
\frac{v}{\tau}(I\nabla_\tau Q-I!_\tau Q)\le I\nabla_v Q-I!_v Q \le \frac{1-v}{1-\tau}(I\nabla_\tau Q-I!_\tau Q).
\end{equation}
\end{lemma}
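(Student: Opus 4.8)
The plan is to reduce the operator inequality \eqref{lm31} to the scalar case \eqref{th21} with $\lambda=1$, treated eigenvalue by eigenvalue through the functional calculus. First I would extract the scalar ingredient: setting $\lambda=1$ in \eqref{th21} and recalling that the arithmetic--harmonic mean inequality forces $A_\tau(a,b)-H_\tau(a,b)>0$ whenever $a\neq b$, I can clear the (positive) denominator in $M_{v,\tau;1}(a,b)$ to obtain
\begin{equation*}
\frac{v}{\tau}\bigl(A_\tau(a,b)-H_\tau(a,b)\bigr)\le A_v(a,b)-H_v(a,b)\le \frac{1-v}{1-\tau}\bigl(A_\tau(a,b)-H_\tau(a,b)\bigr),
\end{equation*}
which now holds for all $a,b>0$: the case $a=b$ makes every term vanish, and the endpoint $v=\tau$ reduces both outer factors to $1$, so the weak inequalities absorb precisely the two degeneracies that the strict form of \eqref{th21} excludes.

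Next I would specialize $a=1$, $b=x$ with $x>0$ and package the differences into a single scalar function $\varphi_w(x)=A_w(1,x)-H_w(1,x)=w+(1-w)x-\bigl(w+(1-w)x^{-1}\bigr)^{-1}$, so that the displayed scalar inequality reads $\tfrac{v}{\tau}\varphi_\tau(x)\le \varphi_v(x)\le \tfrac{1-v}{1-\tau}\varphi_\tau(x)$ for every $x>0$. The purpose of this repackaging is that $\varphi_w$ read through the functional calculus reproduces exactly the matrix quantity in \eqref{lm31}: since $I\nabla_w Q=wI+(1-w)Q$ and $I!_w Q=(wI+(1-w)Q^{-1})^{-1}$ are both functions of the single matrix $Q$, we have $I\nabla_w Q-I!_w Q=\varphi_w(Q)$.

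Now I would invoke the spectral theorem for the positive definite (hence Hermitian) matrix $Q$, writing $Q=U^{*}\operatorname{diag}(\lambda_1,\dots,\lambda_n)U$ with $U$ unitary and each $\lambda_i>0$. Then $\varphi_w(Q)=U^{*}\operatorname{diag}\bigl(\varphi_w(\lambda_1),\dots,\varphi_w(\lambda_n)\bigr)U$, and applying the scalar inequality at each $x=\lambda_i$ shows that the diagonal matrix $\operatorname{diag}\bigl(\varphi_v(\lambda_i)-\tfrac{v}{\tau}\varphi_\tau(\lambda_i)\bigr)$ has nonnegative entries, hence is positive semidefinite; conjugation by the unitary $U$ preserves the L\"owner order, yielding $\varphi_v(Q)-\tfrac{v}{\tau}\varphi_\tau(Q)\ge 0$, which is the left inequality of \eqref{lm31}. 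The right inequality follows in the same way from the upper scalar bound. Once the reduction is in place the argument is essentially bookkeeping, so the only genuine care lies in the first step: justifying that the strict scalar inequality \eqref{th21} may be weakened to the non-strict form needed to accommodate repeated eigenvalues, the fixed point $\lambda_i=1$ where $\varphi_w$ vanishes, and the boundary case $v=\tau$.
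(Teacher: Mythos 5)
Your proposal is correct and follows essentially the same route as the paper: diagonalize $Q$ by the spectral theorem, apply the scalar inequality \eqref{th21} with $\lambda=1$ to each eigenvalue, and transfer the resulting diagonal inequality back via unitary conjugation, which preserves the L\"owner order. Your explicit justification for weakening the strict inequality \eqref{th21} to the non-strict form (covering eigenvalues equal to $1$ and the case $v=\tau$) is a point the paper passes over silently, and is a welcome bit of extra care rather than a divergence in method.
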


\begin{proof}
By the spectral theorem, see \cite[Theorem 2.5.6]{Horn}, there exist a unitary
 matrix $U$ such that $Q=UDU^*$, where $D={\rm diag}(\mu_1,\mu_2,\cdots,\mu_n)$ with the eigenvalues $\mu_i> 0, i=1,2,\cdots,n$ of $Q$.
 Applying \eqref{th21} with $\lambda=1$, we have
\[
\frac{v}{\tau}(1\nabla_\tau \mu_i-1!_\tau \mu_i)\le 1\nabla_v \mu_i-1!_v \mu_i \le \frac{1-v}{1-\tau}(1\nabla_\tau \mu_i-1!_\tau \mu_i).
\]
For diagonal matrix $D$, the above inequality can be written as
\[
\frac{v}{\tau}(I\nabla_\tau D-I!_\tau D)\le I\nabla_v D-I!_v D \le \frac{1-v}{1-\tau}(I\nabla_\tau D-I!_\tau D).
\]
Using the fact that any $\ast$-conjugation preserves the L\"{o}ewner partial order between Hermitian matrices, see \cite[Theorem 7.7.2]{Horn}.
We obtain \eqref{lm31} by applying the $\ast$-conjugation $\bullet\mapsto U\bullet U^*$ to the identity matrix $I$ and the diagonal matrix $D$.
\end{proof}

The next theorem generalize the inequalities \eqref{zuo2} and \eqref{kr1}.

\begin{theorem}
Let $A, B\in M_n(\mathbb{C})$ be positive definite. If $v, \tau$ are real numbers with $0<v\le \tau<1$, then
\begin{equation}
\label{th31}
\frac{v}{\tau}(A\nabla_\tau B-A!_\tau B)\le A\nabla_v B-A!_v B \le \frac{1-v}{1-\tau}(A\nabla_\tau B-A!_\tau B).
\end{equation}
\end{theorem}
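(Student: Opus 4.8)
The plan is to reduce the matrix inequality \eqref{th31} to the already-established scalar-based Lemma \ref{lemma31} by the standard device of left- and right-multiplying by $A^{-1/2}$. The key observation is that all the matrix means appearing in \eqref{th31} behave covariantly under congruence: since the matrix arithmetic and harmonic means are defined through matrix addition and inversion, one checks directly that for any invertible $X$ we have $X(P\nabla_v Q)X^* = (XPX^*)\nabla_v(XQX^*)$ and likewise $X(P!_v Q)X^* = (XPX^*)!_v(XQX^*)$. In particular, choosing $X=A^{-1/2}$ (which exists and is positive definite since $A$ is) sends $A$ to $I$ and $B$ to the positive definite matrix $Q:=A^{-1/2}BA^{-1/2}$.

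The steps, in order, are as follows. First I would record the congruence-covariance identities for $\nabla_v$ and $!_v$; these are immediate from the definitions $A\nabla_v B=vA+(1-v)B$ and $A!_v B=(vA^{-1}+(1-v)B^{-1})^{-1}$, using that conjugation by $X$ commutes with sums and that $(XMX^*)^{-1}=(X^*)^{-1}M^{-1}X^{-1}$. Second, I would set $Q=A^{-1/2}BA^{-1/2}$, which is positive definite, and apply Lemma \ref{lemma31} to $Q$, obtaining
\[
\frac{v}{\tau}(I\nabla_\tau Q-I!_\tau Q)\le I\nabla_v Q-I!_v Q \le \frac{1-v}{1-\tau}(I\nabla_\tau Q-I!_\tau Q).
\]
Third, I would conjugate this entire chain by $A^{1/2}$. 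Since conjugation by a fixed invertible matrix preserves the L\"owner order (the fact cited as \cite[Theorem 7.7.2]{Horn} and already used in the proof of Lemma \ref{lemma31}), the inequalities are preserved, and the scalar factors $v/\tau$ and $(1-v)/(1-\tau)$ pass through unchanged as they multiply the whole matrix.

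Finally, applying the covariance identities with $X=A^{1/2}$ converts each conjugated term back into the means of $A$ and $B$: because $A^{1/2}IA^{1/2}=A$ and $A^{1/2}QA^{1/2}=B$, we get $A^{1/2}(I\nabla_\tau Q)A^{1/2}=A\nabla_\tau B$, $A^{1/2}(I!_\tau Q)A^{1/2}=A!_\tau B$, and similarly for the $v$-weighted means, yielding exactly \eqref{th31}. The only point requiring a little care — though it is not really an obstacle — is verifying the covariance identity for the harmonic mean, since inversion is involved; once that is in hand the argument is purely mechanical. This congruence-reduction is precisely the technique that produces the matrix Young-type inequalities from their scalar counterparts, so no genuinely new idea beyond Lemma \ref{lemma31} is needed.
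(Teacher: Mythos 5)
Your proposal is correct and is essentially identical to the paper's own proof: both reduce to Lemma \ref{lemma31} by setting $Q=A^{-1/2}BA^{-1/2}$ and then conjugate by $A^{1/2}$, using that $\ast$-conjugation preserves the L\"owner order. Your explicit verification of the congruence-covariance identities for $\nabla_v$ and $!_v$ merely spells out a step the paper leaves implicit.
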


\begin{proof}
The matrices $A^{-1/2}$ and $A^{1/2}$ are positive definite under the condition that $A$ is positive definite.
The result follows by putting $Q=A^{-1/2}BA^{-1/2}$ in Lemma \ref{lemma31} and applying the $\ast$-conjugation $\bullet\mapsto A^{1/2}\bullet A^{1/2}$ to it, see \cite[Theorem 7.7.2]{Horn}.
\end{proof}

\begin{corollary}
Let $A, B\in M_n(\mathbb{C})$ be positive definite. If $v$ is a real number with $0<v\le1/2$, then
\begin{equation}
\label{cor31}
2v(A\nabla B-A! B)\le A\nabla_v B-A!_v B \le 2(1-v)(A\nabla B-A!B).
\end{equation}
\end{corollary}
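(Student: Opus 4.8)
The plan is to obtain this corollary as an immediate specialization of the preceding theorem, namely the chain \eqref{th31}, by fixing $\tau=\frac{1}{2}$. First I would verify that the hypothesis of \eqref{th31}, which reads $0<v\le\tau<1$, collapses exactly to the hypothesis $0<v\le\frac{1}{2}$ of the corollary once $\tau=\frac{1}{2}$ is inserted; so no new case analysis on the range of $v$ is needed. Next I would evaluate the two scalar factors appearing in \eqref{th31} at $\tau=\frac{1}{2}$: the lower factor becomes $\frac{v}{\tau}=\frac{v}{1/2}=2v$ and the upper factor becomes $\frac{1-v}{1-\tau}=\frac{1-v}{1/2}=2(1-v)$, which are precisely the constants appearing in \eqref{cor31}.

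The remaining ingredient is the notational convention fixed in the introduction, by which the unweighted operator means are defined as the weighted ones at parameter $v=\frac{1}{2}$; that is, $A\nabla_{1/2}B=A\nabla B$ and $A!_{1/2}B=A!B$. Substituting $\tau=\frac{1}{2}$ into the terms $A\nabla_\tau B$ and $A!_\tau B$ on both ends of \eqref{th31} therefore replaces them by $A\nabla B$ and $A!B$, so that the double inequality \eqref{th31} reads verbatim as \eqref{cor31}. I do not anticipate any genuine obstacle here: the argument is a direct specialization, and the only points to check are that the two arithmetic simplifications of the constants are carried out correctly and that the weighted means are consistently identified with their unweighted counterparts through the stated convention. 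Hence \eqref{cor31} follows at once from \eqref{th31}, and no appeal to the spectral reduction or the scalar estimates of Section~2 is required beyond what is already packaged into the theorem.
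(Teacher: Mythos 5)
Your proposal is correct and coincides with the paper's own proof, which simply sets $\tau=\tfrac{1}{2}$ in \eqref{th31}; your additional verifications (the hypothesis collapsing to $0<v\le\tfrac{1}{2}$, the constants becoming $2v$ and $2(1-v)$, and the convention $A\nabla B=A\nabla_{1/2}B$, $A!B=A!_{1/2}B$) are exactly the implicit steps the paper leaves to the reader.
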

\begin{proof}
Let $\tau=1/2$ in the inequality \eqref{th31}.
\end{proof}

Note that the inequalities \eqref{cor31} are equivalent to \eqref{zuo2} and \eqref{kr1} proved by Zuo et al. \cite{Zuo} and Krni\'{c} et al. \cite{Krni}.
\quad

Based on Theorem \ref{theorem22}, we establish another type of upper bound for the difference of arithmetic mean and harmonic mean for positive definite matrices
which is only related to one argument.
\begin{theorem}
Let $A, B\in M_n(\mathbb{C})$ be positive definite satisfy $0<mI\le A\le B \le MI$. If $v$ is real number with $0\le v \le 1$, then
\begin{equation}
\label{th32}
 A\nabla_v B-A!_v B\le v(1-v)\left(1-\frac{M}{m}\right)^2B.
\end{equation}
\end{theorem}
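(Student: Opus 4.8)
The plan is to collapse \eqref{th32} to a statement about a single positive definite matrix by a congruence, and then to diagonalize and invoke the scalar estimate of Theorem~\ref{theorem22} eigenvalue by eigenvalue. The decisive observation is that all three objects in \eqref{th32} transform covariantly under $\bullet\mapsto A^{1/2}\bullet A^{1/2}$: writing $Q=A^{-1/2}BA^{-1/2}$, one checks $A\nabla_v B=A^{1/2}(I\nabla_v Q)A^{1/2}$ and $A!_v B=A^{1/2}(I!_v Q)A^{1/2}$ (the latter from $B^{-1}=A^{-1/2}Q^{-1}A^{-1/2}$), while the right-hand side satisfies $v(1-v)(1-M/m)^2 B=A^{1/2}\big[v(1-v)(1-M/m)^2 Q\big]A^{1/2}$. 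Since positive definite congruence preserves the L\"{o}wner order (\cite[Theorem 7.7.2]{Horn}), inequality \eqref{th32} is therefore equivalent to
\begin{equation*}
I\nabla_v Q-I!_v Q\le v(1-v)\left(1-\frac{M}{m}\right)^2 Q.
\end{equation*}
The crucial feature is that the right-hand side is proportional to $Q$ rather than to $I$, which is exactly what permits the reduction to one variable.

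Next I would convert the hypotheses into spectral bounds on $Q$. From $A\le B$ we get $I\le Q$, and from $B\le MI$ together with $A\ge mI$ (hence $A^{-1}\le m^{-1}I$) we get $Q\le MA^{-1}\le (M/m)I$, so $I\le Q\le (M/m)I$. Diagonalizing $Q=UDU^*$ via the spectral theorem (\cite[Theorem 2.5.6]{Horn}), each eigenvalue $\lambda$ of $Q$ lies in $[1,M/m]$. Applying the upper bound of Theorem~\ref{theorem22} with $a=1<b=\lambda$ gives $1\nabla_v\lambda-1!_v\lambda\le v(1-v)(1-\lambda)^2\lambda$, and since $t\mapsto(1-t)^2$ is increasing on $[1,\infty)$ we have $(1-\lambda)^2\le(1-M/m)^2$ for every such $\lambda$ (the case $\lambda=1$ being trivial as both sides vanish). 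This yields, for each eigenvalue, the uniform estimate $1\nabla_v\lambda-1!_v\lambda\le v(1-v)(1-M/m)^2\lambda$.

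Finally I would reassemble: reading the last estimate on the diagonal matrix $D$ gives $I\nabla_v D-I!_v D\le v(1-v)(1-M/m)^2 D$, conjugating by $U$ (again \cite[Theorem 7.7.2]{Horn}) returns the reduced inequality for $Q$, and conjugating by $A^{1/2}$ recovers \eqref{th32}; the endpoints $v\in\{0,1\}$ are immediate since both sides vanish. I expect the only genuinely delicate point to be the covariance bookkeeping of the first paragraph, namely verifying that the $B$ on the right pulls back to a $Q$ so that the problem truly collapses to a single matrix; once that is secured, the spectral comparison $(1-\lambda)^2\le(1-M/m)^2$ is routine. As an independent cross-check, one can bypass Theorem~\ref{theorem22} altogether through the exact identity $A\nabla_v B-A!_v B=v(1-v)(A-B)\big[(1-v)A+vB\big]^{-1}(A-B)$ and then use $(1-v)A+vB\ge mI$, $0\le B-A\le(M-m)I$, and $B\ge mI$ to arrive at the same bound.
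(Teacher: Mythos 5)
Your proof is correct and is essentially the paper's own argument: both apply the scalar upper bound of Theorem \ref{theorem22}, with $(1-t)^2$ replaced by its maximum $(1-M/m)^2$ over the spectral interval $[1,M/m]$, to $Q=A^{-1/2}BA^{-1/2}$ (you diagonalize explicitly where the paper invokes the functional-calculus inequality implicitly), and then transport the result back through the congruence $\bullet\mapsto A^{1/2}\bullet A^{1/2}$, merely performing the reduction and the spectral step in the opposite order. Your closing cross-check via the identity $A\nabla_v B-A!_v B=v(1-v)(A-B)\left[(1-v)A+vB\right]^{-1}(A-B)$ is a valid, self-contained alternative not found in the paper, but the main line of reasoning coincides with the published proof.
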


\begin{proof}
From the inequalities \eqref{th22} with $t=b/a>1$, we have
\[
 v+(1-v)t-(v+(1-v)t^{-1})^{-1}\le v(1-v)\left(1-t\right)^2 t,
\]
for $0\le v\le 1$. Thus we have the following inequality for the positive definite matrix $I\le T\le M/mI$,
\[
 v+(1-v)T-(v+(1-v)T^{-1})^{-1}\le v(1-v)\max\limits_{1\le t\le\frac{M}{m} }\left(1-t\right)^2 T.
\]
Since $I\le A^{-\frac{1}{2}}BA^{-\frac{1}{2}}\le M/m I$, putting $T=A^{-\frac{1}{2}}BA^{-\frac{1}{2}}$ in the above inequality, we deduce
\begin{align*}
 v+(1-v)A^{-\frac{1}{2}}BA^{-\frac{1}{2}}&-\left(v+(1-v)\left(A^{-\frac{1}{2}}BA^{-\frac{1}{2}}\right)^{-1}\right)^{-1}\\
&\le v(1-v)\left(1-\frac{M}{m}\right)^2 A^{-\frac{1}{2}}BA^{-\frac{1}{2}},
\end{align*}
then multiplying the both sides by $A^{\frac{1}{2}}$, we obtain the inequalities \eqref{th32}.
\end{proof}

\section{The Hilbert-Schmidt norm inequalities for arithmetic mean and harmonic mean }

Based on the refinements and reverses of the Young inequality, Hirzallah and Kittaneh \cite{Hirzallah} and Kittaneh and Manasrah \cite{Kittaneh} had showed that if $A, B, X\in M_n(\mathbb{C})$ with $A$ and $B$  positive definite, for $v\in [0,1]$, then
\begin{align*}
\min \{v^2,(1-v)^2 \}\left\| AX-XB\right\|_F^2&\le \left\| {vAX +(1 - v)XB} \right\|_F^2-\left\|A^vX B^{1 - v} \right\|_F^2,\\
\max \{v^2,(1-v)^2\}\left\| AX-XB\right\|_F^2&\ge \left\| {vAX +(1 - v)XB} \right\|_F^2-\left\|A^vX B^{1 - v} \right\|_F^2.
\end{align*}

From the above inequalities, it is evident that the following inequality holds:
\[
\left\| {vAX +(1 - v)XB} \right\|_F^2\ge\left\|A^vX B^{1 - v} \right\|_F^2,
\]
and the authors of \cite{Salemi} pointed out that if $A$ and $B$ are positive semidefinite and $X\in M_n(\mathbb{C})$, then for unitarily invariant norm $\||A^vX B^{1 - v} \||\le \||vAX +(1 - v)XB\||$ does not holds in general.

Inspired by the above Hilbert-Schmidt norm versions of the improved Young inequalities, we derive the following theorem about the difference-type inequalities between arithmetic and harmonic mean for the Hilbert-Schmidt norm by the inequality \eqref{th21} with $\lambda=2$.
\begin{theorem}\label{theorem41}
Let $A, B, X\in M_n(\mathbb{C})$ such that $A$ and $B$ be positive definite. If $v, \tau$ are real numbers with $0<v\le \tau<1$, then
\begin{equation}
\label{th41}
\left(\frac{v}{\tau}\right)^2 \le\frac{\left\| {\mathbb{A}_v(A,B;X)} \right\|_F^2-\left\| \mathbb{H}_v(A,B;X) \right\|_F^2}{\left\| {\mathbb{A}_\tau(A,B;X)} \right\|_F^2-\left\| \mathbb{H}_\tau(A,B;X)\right\|_F^2}\le\left(\frac{1-v}{1-\tau}\right)^2,
\end{equation}
where
\begin{align*}
\mathbb{A}_v(A,B;X)&=vAX +(1 - v)XB,\\
 \mathbb{H}_v(A,B;X)&=\left[vX^{-1}A^{-1}+ (1 - v)B^{-1}X^{-1}\right]^{-1}.
 \end{align*}
\end{theorem}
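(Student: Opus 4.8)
The plan is to follow the template of the Hilbert--Schmidt proofs of the Young-type inequalities recalled just before the theorem: diagonalize $A$ and $B$ by the spectral theorem, use that $\|\cdot\|_F$ is unitarily invariant to pass to a sum indexed by pairs of eigenvalues, and then apply the scalar estimate \eqref{th21} with $\lambda=2$ one coordinate at a time. Write $A=U\,\mathrm{diag}(\alpha_1,\dots,\alpha_n)\,U^{*}$ and $B=W\,\mathrm{diag}(\beta_1,\dots,\beta_n)\,W^{*}$ with $U,W$ unitary and all $\alpha_i,\beta_j>0$, and put $Y=U^{*}XW=[y_{ij}]$. A direct computation gives $U^{*}\mathbb{A}_v(A,B;X)W=v\,\mathrm{diag}(\alpha_i)\,Y+(1-v)\,Y\,\mathrm{diag}(\beta_j)$, whose $(i,j)$ entry is $\bigl(v\alpha_i+(1-v)\beta_j\bigr)y_{ij}=A_v(\alpha_i,\beta_j)\,y_{ij}$, so by unitary invariance $\|\mathbb{A}_v(A,B;X)\|_F^{2}=\sum_{i,j}A_v(\alpha_i,\beta_j)^{2}|y_{ij}|^{2}$.

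Suppose for the moment that the harmonic term admits the fully analogous reduction $\|\mathbb{H}_v(A,B;X)\|_F^{2}=\sum_{i,j}H_v(\alpha_i,\beta_j)^{2}|y_{ij}|^{2}$ with the \emph{same} weights $|y_{ij}|^{2}$. Then subtracting gives $\|\mathbb{A}_v\|_F^{2}-\|\mathbb{H}_v\|_F^{2}=\sum_{i,j}\bigl(A_v(\alpha_i,\beta_j)^{2}-H_v(\alpha_i,\beta_j)^{2}\bigr)|y_{ij}|^{2}$, and \eqref{th21} with $\lambda=2$, applied to each pair $(\alpha_i,\beta_j)$, yields $\bigl(\tfrac{v}{\tau}\bigr)^{2}\bigl(A_\tau^{2}-H_\tau^{2}\bigr)\le A_v^{2}-H_v^{2}\le\bigl(\tfrac{1-v}{1-\tau}\bigr)^{2}\bigl(A_\tau^{2}-H_\tau^{2}\bigr)$, where $A_s,H_s$ are evaluated at $(\alpha_i,\beta_j)$ (both sides vanish on the modes with $\alpha_i=\beta_j$, so the non-strict inequality is exactly what is needed). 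Multiplying by the nonnegative weights $|y_{ij}|^{2}$, summing over $i,j$, and dividing by $\|\mathbb{A}_\tau\|_F^{2}-\|\mathbb{H}_\tau\|_F^{2}$---which the same reduction would show is positive when $A\neq B$---produces precisely \eqref{th41}; the boundary case $v=\tau$ is trivial.

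The hard part is exactly the harmonic reduction assumed above, and here the analogy with the geometric mean fails. In the cited Young-type results the geometric mean $A^{v}XB^{1-v}$ conjugates to the two-sided product $\mathrm{diag}(\alpha_i^{v})\,Y\,\mathrm{diag}(\beta_j^{1-v})$, which is again entrywise, namely $\alpha_i^{v}\beta_j^{1-v}y_{ij}$. For $\mathbb{H}_v(A,B;X)=\bigl[v(AX)^{-1}+(1-v)(XB)^{-1}\bigr]^{-1}$ only the \emph{inverse} conjugates entrywise: one finds $W^{*}\mathbb{H}_v(A,B;X)^{-1}U=v\,Y^{-1}\mathrm{diag}(\alpha_j^{-1})+(1-v)\,\mathrm{diag}(\beta_i^{-1})\,Y^{-1}$, with $(i,j)$ entry $H_v(\alpha_j,\beta_i)^{-1}(Y^{-1})_{ij}$, equivalently $W^{*}\mathbb{H}_v^{-1}U=\mathrm{diag}(\beta_i^{-1})\,R\,\mathrm{diag}(\alpha_j^{-1})$ with $R_{ij}=\bigl(v\beta_i+(1-v)\alpha_j\bigr)(Y^{-1})_{ij}$, so that $\|\mathbb{H}_v\|_F^{2}=\|\mathrm{diag}(\alpha_i)\,R^{-1}\,\mathrm{diag}(\beta_j)\|_F^{2}$. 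The global inversion $R\mapsto R^{-1}$ does not act entrywise, so $\|\mathbb{H}_v\|_F^{2}$ is generically \emph{not} the clean diagonal sum $\sum_{i,j}H_v(\alpha_i,\beta_j)^{2}|y_{ij}|^{2}$. Controlling this quantity is therefore the crux: the route I would pursue is to work with the quadratic forms $\|\mathbb{A}_v\|_F^{2}=\Tr(\mathbb{A}_v^{*}\mathbb{A}_v)$ and $\|\mathbb{H}_v\|_F^{2}=\Tr\bigl((K_v^{*}K_v)^{-1}\bigr)$, where $K_v=v(AX)^{-1}+(1-v)(XB)^{-1}$, expand both in the spectral data $\alpha_i,\beta_j,y_{ij}$, and show that the per-mode comparison furnished by \eqref{th21} survives the coupling introduced by the inversion; this inversion step is where the genuine work of the proof lies.
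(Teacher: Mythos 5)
Your proposal is not a complete proof --- you say so yourself --- but the obstruction you isolated is genuine, and it cannot be repaired, because it is exactly the point at which the paper's own argument fails. After diagonalizing $A=U\Lambda_1U^*$, $B=V\Lambda_2V^*$ and setting $Y=U^*XV=[y_{ij}]$, the paper simply asserts the entrywise reduction you (rightly) refused to assume, namely
\[
\left[vX^{-1}A^{-1}+(1-v)B^{-1}X^{-1}\right]^{-1}
= U\left[\left(v\mu_i^{-1}+(1-v)\nu_j^{-1}\right)^{-1}y_{ij}\right]V^*,
\]
and then computes $\left\|\mathbb{H}_v(A,B;X)\right\|_F^2=\sum_{i,j}\left(v\mu_i^{-1}+(1-v)\nu_j^{-1}\right)^{-2}|y_{ij}|^2$. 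As your computation shows, what is actually true is
\[
vX^{-1}A^{-1}+(1-v)B^{-1}X^{-1}
= V\left[\left(v\mu_j^{-1}+(1-v)\nu_i^{-1}\right)(Y^{-1})_{ij}\right]U^*,
\]
and the inverse of this Hadamard-structured matrix is \emph{not} obtained by inverting its entries; only the arithmetic part conjugates entrywise. So the paper's proof is invalid at precisely the step you flagged as the crux, and your refusal to assume the clean diagonal sum for the harmonic term is correct.

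Moreover, the program you sketch at the end (expanding $\Tr\bigl((K_v^*K_v)^{-1}\bigr)$ and hoping the per-mode comparison survives the coupling introduced by inversion) cannot succeed, because the theorem itself is false. Take
\[
A=\begin{pmatrix}1&0\\0&2\end{pmatrix},\qquad
B=\begin{pmatrix}1&0\\0&3\end{pmatrix},\qquad
X=\begin{pmatrix}1&0.48\\0.48&1.48\end{pmatrix},\qquad
v=0.1,\quad \tau=0.5 .
\]
Direct computation (note $X^{-1}A^{-1}=(AX)^{-1}$ and $B^{-1}X^{-1}=(XB)^{-1}$) gives
\[
\left\|\mathbb{A}_{0.5}\right\|_F^2=16.13,\quad
\left\|\mathbb{H}_{0.5}\right\|_F^2\approx 16.1200,\qquad
\left\|\mathbb{A}_{0.1}\right\|_F^2\approx 21.5064,\quad
\left\|\mathbb{H}_{0.1}\right\|_F^2\approx 21.5551,
\]
so the denominator in \eqref{th41} is $\approx +0.0100$ while the numerator is $\approx -0.0487$; the ratio is $\approx -4.9$, violating the lower bound $(v/\tau)^2=0.04$. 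Even the weaker Corollary \eqref{cor41} fails: with the same $A,B$ and $X=\begin{pmatrix}1&1\\1&2\end{pmatrix}$, $v=\tfrac12$, one gets $\left\|\mathbb{A}_{1/2}\right\|_F^2=32.25<46.625=\left\|\mathbb{H}_{1/2}\right\|_F^2$. The underlying reason is the one you identified: $\mathbb{H}_v(A,B;X)$ is only a \emph{formal} harmonic mean of $AX$ and $XB$, which are not positive semidefinite, so neither the entrywise scalar inequality \eqref{th21} nor any order-theoretic argument controls its Hilbert--Schmidt norm. In short, your diagnosis of where the difficulty lies is exactly right, and the difficulty is fatal to the theorem, not merely to this proof strategy.
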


\begin{proof}
Since $A$ and $B$ are positive definite, it follows by the
spectral theorem that there exist unitary matrices $U,V \in M_n ({\rm
C})$ such that
$$
A = U\Lambda _1 U^ * ,
B = V\Lambda _2 V^ * ,
$$
where the two matrices $\Lambda _1 = {\rm diag}(\mu _1 ,\mu _2 , \cdots ,\mu _n
),$ $\Lambda _2 = {\rm diag}(\nu _1 ,\nu _2 , \cdots ,\nu _n )$ and $\mu _i , \nu _i \ge
0,$ $i = 1,2,\cdots,n.$

Let $Y = U^ * XV = [y_{ij} ]$, then
\begin{align*}
vAX +(1 - v)XB &= U(v\Lambda _1 Y + (1-v)Y\Lambda _2 )V^*\\
&=U\left[(v\mu _i + (1-v)\nu _j )y_{ij} \right]V^* ,\\
\left[vX^{-1}A^{-1}+ (1 - v)B^{-1}X^{-1}\right]^{-1}&= U\left(vY^{-1}\Lambda _1^{-1} +(1-v)\Lambda _2^{-1}Y^{-1} \right)V^*\\
&=U\left[\left(v\mu _i^{-1} + (1 - v )\nu _j^{-1} \right)^{-1}y_{ij} \right]V^* .
\end{align*}
Now using the first inequality in \eqref{th21} with $\lambda=2$ and the unitary invariance of the Hilbert-
Schmidt norm, we have
\begin{align*}
&\left\| {vAX +(1 - v)XB} \right\|_F^2-\left\| \left[vX^{-1}A^{-1}+ (1 - v)B^{-1}X^{-1}\right]^{-1} \right\|_F^2\\
&= \sum\limits_{i,j = 1}^n {(v\lambda _i + (1 - v) \nu _j )^2\vert y_{ij} \vert ^2}- \sum\limits_{i,j = 1}^n {\left(v\lambda _i^{-1} + (1 - v) \nu _j^{-1} \right)^{-2}\vert y_{ij} \vert ^2} \\
&=\sum\limits_{i,j = 1}^n \left[{(v\lambda _i +(1 - v)\nu _j )^2}-  {\left(v\lambda _i^{-1} +(1 - v) \nu _j^{-1} \right)^{-2}
}\right]\vert y_{ij} \vert ^2\\
&\ge \left(\frac{v}{\tau}\right)^2\sum\limits_{i,j = 1}^n \left[{(\tau\lambda _i +(1 - \tau)\nu _j )^2}-  {\left(\tau\lambda _i^{-1} +(1 - \tau) \nu _j^{-1} \right)^{-2}
}\right]\vert y_{ij} \vert ^2\\
&= \left(\frac{v}{\tau}\right)^2\left[\sum\limits_{i,j = 1}^n {\left(\tau\lambda _i +(1 - \tau)\nu _j \right)^2}\vert y_{ij} \vert ^2- \sum\limits_{i,j = 1}^n{\left(\tau\lambda _i^{-1} +(1 - \tau) \nu _j^{-1} \right)^{-2}}\vert y_{ij} \vert ^2\right]\\
&= \left(\frac{v}{\tau}\right)^2\left[\left\|\tau AX+(1 - \tau)XB\right\|_F^2-\left\| \left(\tau X^{-1}A^{-1}+ (1 - \tau)B^{-1}X^{-1}\right)^{-1} \right\|_F^2\right],
\end{align*}
which proves the first inequality in \eqref{th41}.

The proof of the second inequality in \eqref{th41} can be completed by an argument similar to that used in the proof of the first one.
\end{proof}

As direct conclusions of Theorem \ref{theorem41}, we have the next two corollarys.
\begin{corollary}
Let $A, B, X\in M_n(\mathbb{C})$ such that $A$ and $B$ be positive definite. If $v$ is a real number with $0\le v\le 1$, then
\begin{equation}
\label{cor41}
\begin{split}
\left\| {vAX +(1 - v)XB} \right\|_F^2&\ge\left\|A^vX B^{1 - v} \right\|_F^2\\
&\ge\left\| \left[vX^{-1}A^{-1}+ (1 - v)B^{-1}X^{-1}\right]^{-1} \right\|_F^2.
\end{split}
\end{equation}
\end{corollary}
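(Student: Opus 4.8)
The plan is to run the simultaneous spectral reduction used in the proof of Theorem \ref{theorem41}, enlarged to cover the geometric-mean term, and then to let the scalar arithmetic--geometric--harmonic mean inequality \eqref{in11} act entrywise. Writing $A=U\Lambda_1U^*$ and $B=V\Lambda_2V^*$ with $\Lambda_1={\rm diag}(\mu_1,\dots,\mu_n)$, $\Lambda_2={\rm diag}(\nu_1,\dots,\nu_n)$, $\mu_i,\nu_j>0$, and $Y=U^*XV=[y_{ij}]$, the two outer terms are already expanded in that proof; the only new ingredient is the middle one, for which $A^v=U\Lambda_1^vU^*$ and $B^{1-v}=V\Lambda_2^{1-v}V^*$ give
\[
A^vXB^{1-v}=U\Lambda_1^vY\Lambda_2^{1-v}V^*=U\big[\mu_i^{v}\nu_j^{1-v}y_{ij}\big]V^*.
\]

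By the unitary invariance of the Hilbert--Schmidt norm, each of the three quantities $\|\mathbb{A}_v(A,B;X)\|_F^2$, $\|A^vXB^{1-v}\|_F^2$ and $\|\mathbb{H}_v(A,B;X)\|_F^2$ then becomes a single weighted sum $\sum_{i,j}c_{ij}^2|y_{ij}|^2$ over the index pairs $(i,j)$, with common nonnegative weights $|y_{ij}|^2$ and coefficient $c_{ij}$ equal respectively to the arithmetic mean $v\mu_i+(1-v)\nu_j$, the geometric mean $\mu_i^{v}\nu_j^{1-v}$, and the harmonic mean $(v\mu_i^{-1}+(1-v)\nu_j^{-1})^{-1}$ of $\mu_i$ and $\nu_j$. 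Applying the scalar chain \eqref{in11} with $a=\mu_i$ and $b=\nu_j$ gives
\[
\big(v\mu_i^{-1}+(1-v)\nu_j^{-1}\big)^{-1}\le\mu_i^{v}\nu_j^{1-v}\le v\mu_i+(1-v)\nu_j,
\]
and since every term is positive these inequalities survive squaring. Multiplying through by $|y_{ij}|^2\ge0$ and summing over $i,j$ yields \eqref{cor41} at once.

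The calculations are routine, so I do not foresee a genuine obstacle; the one point worth flagging is conceptual. Although \eqref{cor41} is advertised as a consequence of Theorem \ref{theorem41}, that theorem compares only the arithmetic and harmonic terms and never mentions the geometric mean, so the middle inequality of the chain must be drawn from the pointwise estimate \eqref{in11} rather than from the ratio bound itself; the leftmost inequality alone is exactly the Hilbert--Schmidt Young inequality recalled at the start of Section 4. The only care needed is to observe that $A^vXB^{1-v}$ diagonalizes against the same unitary pair $(U,V)$ as the two means, after which the problem collapses to the termwise scalar comparison \eqref{in11}.
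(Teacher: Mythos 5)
Your reduction of the middle term is correct: $A^vXB^{1-v}=U\Lambda_1^{v}Y\Lambda_2^{1-v}V^*=U\bigl[\mu_i^{v}\nu_j^{1-v}y_{ij}\bigr]V^*$ is a genuine entrywise (Schur) multiplier of $Y$, and so is the arithmetic term, with coefficient $v\mu_i+(1-v)\nu_j$; consequently the \emph{first} inequality of \eqref{cor41} does follow termwise from \eqref{in11} (it is exactly the Hirzallah--Kittaneh inequality quoted at the start of Section 4, as you note). The gap is in the outer term you imported wholesale from the proof of Theorem \ref{theorem41}. What one actually has is
\[
\mathbb{H}_v(A,B;X)=U\left(vY^{-1}\Lambda_1^{-1}+(1-v)\Lambda_2^{-1}Y^{-1}\right)^{-1}V^*,
\]
and the matrix being inverted has entries $\bigl(v\mu_j^{-1}+(1-v)\nu_i^{-1}\bigr)(Y^{-1})_{ij}$, i.e.\ it is a Hadamard multiplier applied to $Y^{-1}$. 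Inversion passes through such a multiplier entrywise only when the symbol factors as a rank-one product $r_is_j$ (then $C\circ Y^{-1}=D_rY^{-1}D_s$ and $(C\circ Y^{-1})^{-1}=D_s^{-1}YD_r^{-1}$); the harmonic symbol $v\mu_j^{-1}+(1-v)\nu_i^{-1}$ is generically of rank two, so the asserted identity $\mathbb{H}_v(A,B;X)=U\bigl[(v\mu_i^{-1}+(1-v)\nu_j^{-1})^{-1}y_{ij}\bigr]V^*$ is false, and $\left\|\mathbb{H}_v(A,B;X)\right\|_F^2$ never collapses to a weighted sum $\sum_{i,j}h_{ij}^2|y_{ij}|^2$. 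Your termwise application of \eqref{in11} then has nothing to act on for the harmonic side.

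This is not a repairable presentation issue: the second inequality of \eqref{cor41} itself fails. Take $v=\tfrac12$, $A=B=\operatorname{diag}(1,2)$ and
$X=\begin{pmatrix}1&1\\0&1\end{pmatrix}$. Then
\[
\frac{AX+XB}{2}=\begin{pmatrix}1&\frac32\\0&2\end{pmatrix},\qquad
A^{1/2}XB^{1/2}=\begin{pmatrix}1&\sqrt2\\0&2\end{pmatrix},\qquad
\left[\frac{X^{-1}A^{-1}+B^{-1}X^{-1}}{2}\right]^{-1}=\begin{pmatrix}1&\frac32\\0&2\end{pmatrix},
\]
since $\tfrac12\bigl(X^{-1}A^{-1}+B^{-1}X^{-1}\bigr)=\begin{pmatrix}1&-\frac34\\0&\frac12\end{pmatrix}$. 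The squared Hilbert--Schmidt norms are $\tfrac{29}{4}$, $7$ and $\tfrac{29}{4}$ respectively, so $\left\|A^{1/2}XB^{1/2}\right\|_F^2=7<\tfrac{29}{4}=\left\|\mathbb{H}_{1/2}(A,B;X)\right\|_F^2$; note also that the entrywise recipe would predict the harmonic term $\begin{pmatrix}1&\frac43\\0&2\end{pmatrix}$, which differs from the true value. To be fair, you reproduced the paper's own argument faithfully --- the defective diagonalization of $\mathbb{H}_v(A,B;X)$ occurs verbatim in the paper's proof of Theorem \ref{theorem41}, and the corollary's stated proof merely points back to it --- but the step fails for any $X$ not simultaneously ``aligned'' with $A$ and $B$ (for $X=I$, by contrast, the claim reduces to \eqref{in1} and is true). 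A correct blind proof could only establish the first inequality of the chain.
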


\begin{proof}
The inequality \eqref{cor41} follows from the proof of Theorem \ref{theorem41} by the inequalities \eqref{in11}.
\end{proof}

\begin{corollary}
Let $A, B, X\in M_n(\mathbb{C})$ such that $A$ and $B$ be positive definite. If $v$ is a real number with $0\le v\le 1/2$, then

\begin{equation}
\label{cor42}
\begin{split}
4v^2&\left[\left\| \frac{AX+XB}{2}\right\|_F^2-\left\| \left(\frac{X^{-1}A^{-1}+ B^{-1}X^{-1}}{2}\right)^{-1} \right\|_F^2\right]\\
&\le\left\| {vAX +(1 - v)XB} \right\|_F^2-\left\| \left[vX^{-1}A^{-1}+ (1 - v)B^{-1}X^{-1}\right]^{-1} \right\|_F^2\\
&\le 4(1-v)^2\left[\left\| \frac{AX+XB}{2}\right\|_F^2-\left\| \left(\frac{X^{-1}A^{-1}+ B^{-1}X^{-1}}{2}\right)^{-1} \right\|_F^2\right].
\end{split}
\end{equation}
\end{corollary}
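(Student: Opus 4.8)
The plan is to obtain \eqref{cor42} as the special case $\tau=\tfrac12$ of Theorem \ref{theorem41}. First I would substitute $\tau=\tfrac12$ into the defining expressions: the arithmetic term becomes $\mathbb{A}_{1/2}(A,B;X)=\tfrac12(AX+XB)=\tfrac{AX+XB}{2}$ and the harmonic term becomes $\mathbb{H}_{1/2}(A,B;X)=\bigl(\tfrac12 X^{-1}A^{-1}+\tfrac12 B^{-1}X^{-1}\bigr)^{-1}=\bigl(\tfrac{X^{-1}A^{-1}+B^{-1}X^{-1}}{2}\bigr)^{-1}$. With $\tau=\tfrac12$ the two multiplicative constants in \eqref{th41} simplify to $(v/\tau)^2=4v^2$ and $\bigl((1-v)/(1-\tau)\bigr)^2=4(1-v)^2$, which are precisely the constants appearing in \eqref{cor42}.

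Next I would clear the denominator. Theorem \ref{theorem41} states its conclusion as a two-sided bound on the ratio whose denominator is $D:=\|\mathbb{A}_{1/2}(A,B;X)\|_F^2-\|\mathbb{H}_{1/2}(A,B;X)\|_F^2$. To pass from the quotient form \eqref{th41} to the difference form \eqref{cor42} I must multiply through by $D$, and this preserves the direction of both inequalities precisely because $D\ge 0$. The nonnegativity of $D$ is furnished by inequality \eqref{cor41} applied with $v=\tfrac12$, which gives $\|\mathbb{A}_{1/2}(A,B;X)\|_F^2\ge\|A^{1/2}XB^{1/2}\|_F^2\ge\|\mathbb{H}_{1/2}(A,B;X)\|_F^2$, hence $D\ge 0$. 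After cross-multiplying, the chain $4v^2 D\le\|\mathbb{A}_v(A,B;X)\|_F^2-\|\mathbb{H}_v(A,B;X)\|_F^2\le 4(1-v)^2 D$ is exactly \eqref{cor42}.

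I expect no genuine obstacle here; the entire content sits in Theorem \ref{theorem41}, and the only care required is the bookkeeping of the sign of $D$ together with the boundary and degenerate cases that fall outside the hypothesis $0<v\le\tau<1$. For $v=0$ I would verify \eqref{cor42} by direct inspection: the middle quantity equals $\|XB\|_F^2-\|(B^{-1}X^{-1})^{-1}\|_F^2=\|XB\|_F^2-\|XB\|_F^2=0$, while the left constant $4v^2=0$ and the right-hand side is $4D\ge 0$, so the chain reads $0\le 0\le 4D$. When $D=0$ the quotient in \eqref{th41} is undefined, but in that situation the spectral identities in the proof of Theorem \ref{theorem41} force each summand of the middle difference to vanish too, so \eqref{cor42} collapses to $0\le 0\le 0$; I would sidestep this entirely by arguing with the cross-multiplied inequalities from the start rather than with the ratio. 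The case $v=\tfrac12$ is trivial, since all three expressions then coincide with $D$.
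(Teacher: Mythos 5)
Your proof is correct and takes exactly the route the paper intends: the corollary is stated as a direct conclusion of Theorem \ref{theorem41}, and your argument is precisely the specialization $\tau=\tfrac12$ (giving the constants $4v^2$ and $4(1-v)^2$), cleared of the denominator using $D\ge 0$ via \eqref{cor41}. Your extra care with the degenerate cases $v=0$, $v=\tfrac12$, and $D=0$ --- resolved by noting that the proof of Theorem \ref{theorem41} actually establishes the cross-multiplied difference form rather than the ratio --- goes beyond what the paper records but is sound and changes nothing in substance.
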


Note that \eqref{cor41} can be regarded as the arithmetic-harmonic mean inequality for
the Hilbert-Schmidt norm. The inequalities \eqref{cor42} contain the refinement and reverse of \eqref{cor41} and presented the Hilbert-Schmidt norm version of \eqref{rem21}.

\section{Determinant inequalities for arithmetic mean and harmonic mean}
In this section, the singular values of a matrix $A$ are denoted by $s_j(A), j=1,2,\cdots, n$ and we adhere to the convention that singular values are sorted in non-increasing order.
$\det(A)$ denotes the determinant of $A$.

Obviously, by \eqref{in1}, for positive definite matrices $A,B\in M_n(\mathbb{C})$ and $0\le v\le 1$,
\[
 A!_v B\le  A\nabla_v B.
\]
So we have the following proposition.
\begin{proposition}
Let $A,B\in M_n(\mathbb{C})$ be positive definite matrices and $0\le v\le 1$. Then
\begin{equation}
\label{52}
\det \left(A!_v B\right)^{\lambda}\le \det  \left(A\nabla_v B\right)^{\lambda}.
\end{equation}
\end{proposition}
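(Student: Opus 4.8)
The plan is to reduce the determinant inequality to the Löwner-order inequality recorded just before the statement, and then to pass the resulting scalar inequality through the increasing map $t\mapsto t^{\lambda}$. Concretely, the excerpt observes that \eqref{in1} gives $A!_v B\le A\nabla_v B$ in the Löwner order for positive definite $A,B$ and $0\le v\le 1$; this is the only nontrivial input I would use.

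First I would check that both means are genuinely positive definite, so that their determinants are positive real numbers. Indeed $vA^{-1}+(1-v)B^{-1}$ is a convex combination of the positive definite matrices $A^{-1}$ and $B^{-1}$, hence positive definite, and its inverse $A!_v B$ is therefore positive definite; likewise $A\nabla_v B=vA+(1-v)B$ is positive definite. In particular $\det(A!_v B)>0$ and $\det(A\nabla_v B)>0$, so the powers appearing in \eqref{52} are unambiguously defined.

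The key step is the monotonicity of the determinant with respect to the Löwner order on positive definite matrices: if $0<C\le D$ then $\det C\le\det D$. I would prove this by writing $C^{-1/2}DC^{-1/2}\ge C^{-1/2}CC^{-1/2}=I$, so every eigenvalue of $C^{-1/2}DC^{-1/2}$ is at least $1$ and hence $\det(C^{-1/2}DC^{-1/2})\ge 1$; since $\det(C^{-1/2}DC^{-1/2})=\det(D)/\det(C)$, this yields $\det C\le\det D$. Applying this with $C=A!_v B$ and $D=A\nabla_v B$ gives $\det(A!_v B)\le\det(A\nabla_v B)$.

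Finally, since both determinants are positive and $t\mapsto t^{\lambda}$ is non-decreasing on $(0,\infty)$ for $\lambda\ge 0$, raising the last inequality to the $\lambda$-th power yields \eqref{52}. I do not anticipate any serious obstacle here: the entire content sits in the standard determinant-monotonicity fact, while the positivity checks and the final power step are routine. The only point worth flagging is that the conclusion requires $\lambda\ge 0$ (with strictness for $\lambda>0$ when $A\ne B$), a hypothesis the statement appears to inherit implicitly from the earlier theorems.
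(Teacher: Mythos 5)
Your proof is correct, but it takes a genuinely different route from the paper's. The paper never invokes the operator inequality $A!_v B\le A\nabla_v B$ inside its proof (even though it states it just beforehand as motivation): instead it sets $T=A^{-1/2}BA^{-1/2}$, applies the \emph{scalar} arithmetic--harmonic mean inequality $\left(v+(1-v)s_i(T)\right)^{\lambda}\ge\left(v+(1-v)s_i(T)^{-1}\right)^{-\lambda}$ to each eigenvalue of $T$ separately, multiplies over $i$ using the multiplicativity of the determinant, and then restores the factors $\det A^{1/2}$ by congruence. You instead take the L\"{o}wner-order inequality from \eqref{in1} as the single nontrivial input and combine it with determinant monotonicity ($0<C\le D\Rightarrow\det C\le\det D$, which you prove correctly via $C^{-1/2}DC^{-1/2}\ge I$), finishing with the monotone map $t\mapsto t^{\lambda}$. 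Your argument is shorter and more modular; the paper's eigenvalue-by-eigenvalue scheme is self-contained at the scalar level and, more importantly, is exactly the template reused in the subsequent Theorem \ref{theorem51}, where plain L\"{o}wner monotonicity no longer suffices and Minkowski's product inequality must enter---so the paper's proof is really setting up that machinery. Two small points: you are right that the statement leaves $\lambda$ unquantified and that $\lambda\ge 0$ is what is needed (the section's standing hypothesis is $\lambda\ge 1$, under which both proofs work); and your parenthetical strictness claim for $A\ne B$ additionally requires $0<v<1$, since at $v\in\{0,1\}$ both sides coincide regardless.
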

\begin{proof}
Denote the positive definite matrix $T=A^{-\frac{1}{2}}BA^{-\frac{1}{2}}$, by arithmetic-harmonic mean inequality, then we have
\[
 \left(v+(1-v)s_i(T)\right)^{\lambda}\ge\left(v+(1-v)s_i(T)^{-1}\right)^{-\lambda}
\]
for all $i=1,2,\cdots,n$.
\begin{align*}
\det(v+(1-v)T)^{\lambda}
&=\prod\limits_{i=1}^n(v+(1-v)s_i(T))^{\lambda}\\
&\ge\prod\limits_{i=1}^n\left(v+(1-v)s_i(T)^{-1}\right)^{-\lambda}\\
&=\det(v+(1-v)T^{-1})^{-\lambda}.
\end{align*}
Multiplying $\left(\det A^{1/2}\right)^{\lambda}$ to the both sides, we deduce the result by the multiplicativity of the determinant.
\end{proof}

Next, we will improve the inequality \eqref{52}, the following two lemmas must be mentioned.
\begin{lemma}{\rm(Minkowski's product inequality \cite[p.560]{Horn})} Let $a=[a_i],b=[b_i], i=1,2,\cdots,n$ such that $a_i$ and $b_i$ be positive real numbers. Then
\[
\left(\prod\limits_{i=1}^na_i\right)^{\frac{1}{n}}+\left(\prod\limits_{i=1}^n b_i\right)^{\frac{1}{n}}\le \left(\prod\limits_{i=1}^n(a_i+b_i)\right)^{\frac{1}{n}}.
\]
Equality holds if and only if a=b.
\end{lemma}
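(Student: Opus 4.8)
The plan is to deduce this superadditivity of the geometric mean from the ordinary arithmetic--geometric mean inequality by a homogenization step. Since every $a_i+b_i>0$, I would first divide the asserted inequality through by $\left(\prod_{i=1}^n(a_i+b_i)\right)^{1/n}$; this is harmless and reduces the claim to the equivalent normalized inequality
\[
\left(\prod_{i=1}^n\frac{a_i}{a_i+b_i}\right)^{1/n}+\left(\prod_{i=1}^n\frac{b_i}{a_i+b_i}\right)^{1/n}\le 1.
\]

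Next I would set $x_i=\dfrac{a_i}{a_i+b_i}$ and $y_i=\dfrac{b_i}{a_i+b_i}$, so that $x_i,y_i\in(0,1)$ and, decisively, $x_i+y_i=1$ for each $i$. The normalized inequality then reads $\left(\prod_i x_i\right)^{1/n}+\left(\prod_i y_i\right)^{1/n}\le 1$. Applying AM--GM to the two families separately gives $\left(\prod_i x_i\right)^{1/n}\le\frac1n\sum_i x_i$ and $\left(\prod_i y_i\right)^{1/n}\le\frac1n\sum_i y_i$; adding these and using $\sum_i(x_i+y_i)=n$ delivers exactly the bound $1$, which proves the inequality. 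Multiplying back through by $\left(\prod_i(a_i+b_i)\right)^{1/n}$ recovers the stated form.

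For the equality discussion I would simply track the equality case of AM--GM. Equality in the summed estimate forces equality in \emph{both} applications of AM--GM at once, i.e.\ $x_1=\cdots=x_n$ (equivalently $y_1=\cdots=y_n$); since $x_i$ being constant is the same as $a_i/b_i$ being constant, equality holds precisely when $a$ and $b$ are proportional. I would note that this is the honest equality condition, the literal case $a=b$ corresponding to common ratio $1$, and that for $n=1$ the inequality is an identity. The argument has no real obstacle: the only genuine idea is the normalizing division that converts the problem into a pure AM--GM statement, after which everything is routine; the mildly delicate point is phrasing the equality case as proportionality rather than as literal equality of the two vectors.
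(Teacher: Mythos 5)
Your proof is correct, and since the paper does not prove this lemma at all---it simply cites \cite[p.560]{Horn}---there is no internal argument to compare with; your normalization $x_i=a_i/(a_i+b_i)$, $y_i=b_i/(a_i+b_i)$ (so that $x_i+y_i=1$), followed by AM--GM applied to each family and summation, is the standard textbook proof of this superadditivity of the geometric mean. One remark in your write-up is a genuine improvement on the statement itself: the equality condition printed in the lemma (``if and only if $a=b$'') is too restrictive, and your analysis yields the correct one, namely that $a$ and $b$ are proportional; for instance $a=(1,1)$, $b=(2,2)$ gives $1+2=3=(3\cdot 3)^{1/2}$, so equality occurs with $a\neq b$. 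This looseness is immaterial for the paper, which invokes the lemma only as an inequality (in the proof of Theorem \ref{theorem51}), but your formulation of the equality case is the accurate one.
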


\begin{lemma}\label{lemma52}
Let $a$ and $b$ be positive real numbers with $a>b$. If $\lambda\ge1$, then
\[
a^{\lambda}-b^{\lambda}\ge (a-b)^{\lambda}.
\]

\end{lemma}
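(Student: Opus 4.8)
The plan is to reduce the two-variable statement to a one-variable inequality by homogeneity and then exploit the superadditivity of the power function $x \mapsto x^{\lambda}$ for $\lambda \ge 1$. Since both sides of the claimed inequality are homogeneous of degree $\lambda$ in $(a,b)$, I would divide through by $a^{\lambda}>0$ and set $t = b/a$, which lies in $(0,1)$ precisely because $a>b>0$. The inequality $a^{\lambda}-b^{\lambda}\ge (a-b)^{\lambda}$ then becomes $1 - t^{\lambda}\ge (1-t)^{\lambda}$ for $t\in(0,1)$, and it suffices to prove this.

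The key elementary fact I would establish first is that $s^{\lambda}\le s$ whenever $s\in[0,1]$ and $\lambda\ge1$; this is immediate since $s^{\lambda-1}\le 1$ for such $s$ and $\lambda$. Applying it with $s = 1-t$ gives $(1-t)^{\lambda}\le 1-t$, while applying it with $s=t$ gives $t^{\lambda}\le t$, i.e. $1-t\le 1-t^{\lambda}$. Chaining these two bounds yields $(1-t)^{\lambda}\le 1-t\le 1-t^{\lambda}$, which is exactly the reduced inequality; multiplying back by $a^{\lambda}$ recovers the claim.

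Equivalently, and perhaps more transparently, I could phrase the whole argument through superadditivity: for $\lambda\ge1$ and $x,y\ge0$ one has $x^{\lambda}+y^{\lambda}\le (x+y)^{\lambda}$, obtained by noting $\frac{x}{x+y},\frac{y}{x+y}\in[0,1]$, applying $s^{\lambda}\le s$ to each, summing, and clearing the common factor $(x+y)^{\lambda}$. Taking $x=a-b$ and $y=b$, so that $x+y=a$, gives $(a-b)^{\lambda}+b^{\lambda}\le a^{\lambda}$, which is the assertion after rearrangement.

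The argument is short and the only genuine ingredient is the monotonicity fact $s^{\lambda}\le s$ on $[0,1]$ for $\lambda\ge1$; I do not expect a real obstacle. The one point requiring care is to use the hypotheses $a>b>0$ to guarantee $t\in(0,1)$ (equivalently $a-b>0$), so that the reduction and the substitution $x=a-b$ are legitimate. I would also observe that the inequality can be made strict when $b>0$, though the statement only asks for the weak form.
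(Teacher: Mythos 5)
Your proof is correct. Note that the paper states Lemma \ref{lemma52} without any proof at all---it is invoked as a standard fact in the proof of Theorem \ref{theorem51}---so there is no argument of the authors' to compare yours against; what you have done is supply the missing justification. Both of your phrasings work and are essentially the same observation: the homogeneous reduction to $1-t^{\lambda}\ge(1-t)^{\lambda}$ for $t\in(0,1)$ via the elementary bound $s^{\lambda}\le s$ on $[0,1]$, and the superadditivity statement $x^{\lambda}+y^{\lambda}\le(x+y)^{\lambda}$ for $x,y\ge 0$ with $x=a-b$, $y=b$. The second formulation is the cleaner one to cite, since it makes visible that the inequality is exactly superadditivity of $x\mapsto x^{\lambda}$ on $[0,\infty)$ for $\lambda\ge 1$, and your use of the hypothesis $a>b>0$ (to ensure $a-b>0$, equivalently $t\in(0,1)$) is exactly where it is needed.
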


\begin{theorem}\label{theorem51}
Let $A, B\in M_n(\mathbb{C})$ be positive definite. If $v, \tau$ and $\lambda$ are real numbers with $0<v\le \tau<1$ and $\lambda\ge1$, then
\begin{equation}
\label{53}
\left(\frac{v}{\tau}\right)^{\lambda}\det\left(A\nabla_\tau B-A!_\tau B\right)^{\frac{\lambda}{n}}\le\det \left(A\nabla_v B\right)^{\frac{\lambda}{n}}-\det \left(A!_v B\right)^{\frac{\lambda}{n}}.
\end{equation}
\end{theorem}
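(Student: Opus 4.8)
The plan is to reduce the determinant inequality \eqref{53} to a product of the scalar estimates already available, by simultaneously diagonalizing both means through the positive definite matrix $T=A^{-1/2}BA^{-1/2}$, and then to combine three ingredients: the scalar bound \eqref{th21}, Minkowski's product inequality, and Lemma \ref{lemma52}.

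First I would write $A=A^{1/2}A^{1/2}$ and $B=A^{1/2}TA^{1/2}$, so that $A\nabla_v B=A^{1/2}(vI+(1-v)T)A^{1/2}$ and $A!_v B=A^{1/2}(vI+(1-v)T^{-1})^{-1}A^{1/2}$. Letting $s_1,\dots,s_n$ be the eigenvalues of $T$ (which coincide with its singular values, $T$ being positive definite), multiplicativity of the determinant yields
\[
\det(A\nabla_v B)=\det(A)\prod_{i=1}^n A_v(1,s_i),\qquad \det(A!_v B)=\det(A)\prod_{i=1}^n H_v(1,s_i),
\]
and, since $A\nabla_\tau B-A!_\tau B$ is a function of $T$ with eigenvalues $A_\tau(1,s_i)-H_\tau(1,s_i)\ge0$,
\[
\det(A\nabla_\tau B-A!_\tau B)=\det(A)\prod_{i=1}^n\left[A_\tau(1,s_i)-H_\tau(1,s_i)\right].
\]
After raising everything to the power $\lambda/n$, the statement becomes a comparison of products of the scalars $A_v(1,s_i)$ and $H_v(1,s_i)$.

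The crucial step is an application of Minkowski's product inequality with $a_i=A_v(1,s_i)^\lambda-H_v(1,s_i)^\lambda$ and $b_i=H_v(1,s_i)^\lambda$, both nonnegative because $A_v\ge H_v$, so that $a_i+b_i=A_v(1,s_i)^\lambda$. This gives
\[
\det(A\nabla_v B)^{\frac{\lambda}{n}}-\det(A!_v B)^{\frac{\lambda}{n}}\ge \det(A)^{\frac{\lambda}{n}}\left(\prod_{i=1}^n\left[A_v(1,s_i)^\lambda-H_v(1,s_i)^\lambda\right]\right)^{\frac1n}.
\]
Next I would invoke the first inequality in \eqref{th21} (valid since $\lambda\ge1$) factor by factor, which extracts the constant $(v/\tau)^\lambda$ and replaces $v$ by $\tau$ inside each bracket, and then apply Lemma \ref{lemma52} in the form $A_\tau(1,s_i)^\lambda-H_\tau(1,s_i)^\lambda\ge\left[A_\tau(1,s_i)-H_\tau(1,s_i)\right]^\lambda$. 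Reassembling the resulting product with the factor $\det(A)^{\lambda/n}$ recovers exactly $\left(v/\tau\right)^\lambda\det(A\nabla_\tau B-A!_\tau B)^{\lambda/n}$, which is \eqref{53}.

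The main obstacle is the bookkeeping of the Minkowski step: one must choose $a_i,b_i$ so that superadditivity of the geometric mean produces precisely the difference $\det(A\nabla_v B)^{\lambda/n}-\det(A!_v B)^{\lambda/n}$ on the left and an isolated geometric mean of the quantities $A_v(1,s_i)^\lambda-H_v(1,s_i)^\lambda$ on the right. A minor technical point is the degenerate case $s_i=1$, where a factor of $a_i$ and the matching factor of $A_\tau(1,s_i)-H_\tau(1,s_i)$ both vanish; there both sides of \eqref{53} degenerate and the inequality holds by continuity (Minkowski's inequality persists for nonnegative entries), so I would either exclude these indices at the outset or note that the strict positivity needed in the two lemmas is only required on the nonvanishing factors.
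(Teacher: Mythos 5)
Your proposal is correct and follows essentially the same route as the paper's proof: both reduce to the eigenvalues of $T=A^{-1/2}BA^{-1/2}$ via multiplicativity of the determinant, and combine the scalar inequality \eqref{th21}, Minkowski's product inequality (with the same splitting in which $b_i=H_v(1,s_i(T))^\lambda$), and Lemma \ref{lemma52}. The only differences are immaterial: you apply Minkowski before rather than after the scalar bound, and you explicitly (and helpfully) dispose of the degenerate indices with $s_i(T)=1$, which the paper sidesteps by restricting \eqref{th21} to indices with $s_i(T)\neq 1$.
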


\begin{proof}
By the first inequality of \eqref{th21} and we denote the positive definite matrix $T=A^{-\frac{1}{2}}BA^{-\frac{1}{2}}$,
\[
\left(\frac{v}{\tau}\right)^{\lambda}\le \frac{\left(v+(1-v)s_i(T)\right)^{\lambda}-\left(v+(1-v)s_i(T)^{-1}\right)^{-\lambda}}{\left(\tau+(1-\tau)s_i(T)\right)^{\lambda}-\left(\tau+(1-\tau)s_i(T)^{-1}\right)^{-\lambda}}
\]
for all $i=1,2,\cdots,n$ for which $s_i(T)\neq 1$. Since the determinant of a positive definite matrix is product of its singular values. Thus
\begin{align*}
\det &\left(vI+(1-v)T\right)^{\frac{\lambda}{n}}\\
&=\left(\det\left(vI+(1-v)T\right)^{\lambda}\right)^{\frac{1}{n}}\\
&=\left(\prod\limits_{i=1}^n\left(vI+(1-v)s_i(T)\right)^{\lambda}\right)^{\frac{1}{n}}=\left(\prod\limits_{i=1}^nA_v(1,s_i(T))^{\lambda}\right)^{\frac{1}{n}}\\
&\ge\left(\prod\limits_{i=1}^n\left[\left(\frac{v}{\tau}\right)^{\lambda}\left(A_\tau(1,s_i(T))^{\lambda}-H_\tau(1,s_i(T))^{\lambda}\right)+H_v(1,s_i(T))^{\lambda}\right]\right)^{\frac{1}{n}}\\
&\ge\left(\frac{v}{\tau}\right)^{\lambda}\prod\limits_{i=1}^n\left[A_\tau(1,s_i(T))^{\lambda}-H_\tau(1,s_i(T))^{\lambda}\right]^{\frac{1}{n}}+\prod\limits_{i=1}^n\left[H_v(1,s_i(T))^{\lambda}\right]^{\frac{1}{n}}\\
&\ge\left(\frac{v}{\tau}\right)^{\lambda}\prod\limits_{i=1}^n\left[\left(A_\tau(1,s_i(T))-H_\tau(1,s_i(T))\right)^{\lambda}\right]^{\frac{1}{n}}+\prod\limits_{i=1}^n\left[H_v(1,s_i(T))^{\lambda}\right]^{\frac{1}{n}}\\
&=\left(\frac{v}{\tau}\right)^{\lambda}\det\left[\left(I\nabla_\tau T-I!_\tau T\right)^{\lambda}\right]^{\frac{1}{n}}+\left[\det\left(I!_v T\right)^{\lambda}\right]^{\frac{1}{n}}.
\end{align*}
The second inequality is obtained by Minkowski's product inequality and the last inequality is obtained by Lemma \ref{lemma52}.
Multiplying $\left(\det A^{1/2}\right)^{\lambda/n}$ to the both sides and by the multiplicativity of the determinant, we derive \eqref{53}.
\end{proof}

\begin{remark}
If $\lambda=1$ in the inequality \eqref{53}, then
\[
\frac{v}{\tau}\det\left(A\nabla_\tau B-A!_\tau B\right)^{\frac{1}{n}}\le\det \left(A\nabla_v B\right)^{\frac{1}{n}}-\det \left(A!_v B\right)^{\frac{1}{n}}.
\]
If $\lambda=1$ and $\tau=1/2$ in the inequality \eqref{53}, then
\[
2v\det\left(A\nabla B-A! B\right)^{\frac{1}{n}}\le\det \left(A\nabla_v B\right)^{\frac{1}{n}}-\det \left(A!_v B\right)^{\frac{1}{n}}.
\]
\end{remark}

\begin{corollary}
Let $A, B\in M_n(\mathbb{C})$ be positive definite. If $v, \tau$ are real numbers with $0<v\le \tau<1$, then
\begin{equation}
\label{54}
\det A!_v B+\left(\frac{v}{\tau}\right)^n\det\left(A\nabla_\tau B-A!_\tau B\right)\le \det A\nabla_v B.
\end{equation}
\end{corollary}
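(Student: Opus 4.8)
The plan is to derive \eqref{54} as an immediate specialization of Theorem \ref{theorem51}, with no new analytic input required. The decisive observation is that the exponent $\frac{\lambda}{n}$ that pervades \eqref{53} collapses to $1$ exactly when the free parameter $\lambda$ is taken equal to the matrix dimension $n$. Because $A$ and $B$ belong to $M_n(\mathbb{C})$, the dimension $n$ is a positive integer and in particular $n \ge 1$, so $\lambda = n$ is a permissible value under the hypothesis $\lambda \ge 1$ of Theorem \ref{theorem51}.

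Setting $\lambda = n$ in \eqref{53} and using $\frac{\lambda}{n} = 1$, I would obtain
\[
\left(\frac{v}{\tau}\right)^{n}\det\left(A\nabla_\tau B - A!_\tau B\right) \le \det\left(A\nabla_v B\right) - \det\left(A!_v B\right),
\]
where the hypotheses $0 < v \le \tau < 1$ carry over unchanged. Transposing $\det\left(A!_v B\right)$ to the left-hand side then reproduces \eqref{54} verbatim.

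I do not anticipate any genuine obstacle: the substantive steps, namely the pointwise estimate coming from the first inequality of \eqref{th21}, Minkowski's product inequality, and Lemma \ref{lemma52}, have all been absorbed into the proof of Theorem \ref{theorem51}. The sole point deserving a moment's care is the legitimacy of inserting the integer $n$ for the continuous parameter $\lambda$; once $n \ge 1$ is noted, the corollary is purely a matter of algebraic rearrangement.
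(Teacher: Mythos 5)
Your proposal is correct: with $\lambda=n$ (admissible since $n\ge 1$), inequality \eqref{53} reads $\left(\frac{v}{\tau}\right)^{n}\det\left(A\nabla_\tau B-A!_\tau B\right)\le \det\left(A\nabla_v B\right)-\det\left(A!_v B\right)$, which is \eqref{54} after transposing $\det\left(A!_v B\right)$. The only point needing the moment's care you flagged is the reading of the exponent: in \eqref{53} the expression $\det(\cdot)^{\frac{\lambda}{n}}$ must mean the scalar power $\left(\det(\cdot)\right)^{\frac{\lambda}{n}}$, and the first line of the paper's proof of Theorem \ref{theorem51} confirms this convention (for positive semidefinite arguments the two readings coincide anyway, so nothing is at risk). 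Your route differs from the paper's, which does not invoke \eqref{53} at all but states that the corollary is proved by an argument ``similar to that of Theorem \ref{theorem51}'' --- i.e., by re-running the spectral reduction to $T=A^{-\frac{1}{2}}BA^{-\frac{1}{2}}$, applying \eqref{th21} (with $\lambda=1$) to each singular value, and using Minkowski's product inequality, which yields $\prod_i(a_i+b_i)\ge \prod_i a_i+\prod_i b_i$ and hence \eqref{54} directly, without Lemma \ref{lemma52}. The trade-off: your specialization is shorter and makes the logical dependence explicit (the corollary is literally the $\lambda=n$ instance of the theorem, something the paper leaves unremarked), though it silently inherits the theorem's use of Lemma \ref{lemma52} with $\lambda=n$, whereas the paper's intended re-run avoids that lemma entirely. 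Both arguments are sound, and both cover the degenerate case $A=B$, where $A\nabla_\tau B-A!_\tau B=0$ and \eqref{54} reduces to the arithmetic--harmonic determinant inequality \eqref{52}.
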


\begin{proof}
The proof is similar to that of Theorem \ref{theorem51}.
\end{proof}
\begin{corollary}
Let $A, B\in M_n(\mathbb{C})$ be positive definite. If $v$ is real number with $0\le v\le1/2$, then
\begin{equation}
\label{55}
\det A!_v B+(2v)^n\det\left(A\nabla B-A! B\right)\le \det A\nabla_v B.
\end{equation}
\end{corollary}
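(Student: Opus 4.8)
The plan is to obtain \eqref{55} as the $\tau=\tfrac12$ specialization of the immediately preceding corollary \eqref{54}, so that essentially no new work is required. First I would invoke \eqref{54}, which asserts that for positive definite $A,B$ and real numbers with $0<v\le\tau<1$,
\[
\det A!_v B+\left(\frac{v}{\tau}\right)^n\det\left(A\nabla_\tau B-A!_\tau B\right)\le \det A\nabla_v B.
\]
Setting $\tau=\tfrac12$ and using the brevity conventions $A\nabla_{1/2}B=A\nabla B$ and $A!_{1/2}B=A!B$ introduced in the Introduction, the coefficient becomes $\left(\frac{v}{1/2}\right)^n=(2v)^n$ and the difference term becomes $\det(A\nabla B-A!B)$. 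This reproduces \eqref{55} verbatim on the range $0<v\le\tfrac12$.

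The only point needing a separate word is the left endpoint $v=0$, which is included in the stated hypothesis $0\le v\le\tfrac12$ but excluded by the strict inequality $0<v$ in \eqref{54}. At $v=0$ one checks directly that $A!_0 B=B=A\nabla_0 B$, while the middle term $(2v)^n\det(A\nabla B-A!B)$ vanishes since $(2v)^n=0$; hence \eqref{55} collapses to the trivial equality $\det B\le\det B$. Thus the inequality extends to the endpoint without difficulty.

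I do not anticipate any genuine obstacle here, since the result is a direct substitution into an already-established inequality; the endpoint check is the only place where a line of care is needed. Should a self-contained argument be preferred, one could instead repeat the proof of Theorem \ref{theorem51} with $\lambda=1$ and $\tau=\tfrac12$, combining the first inequality of \eqref{th21}, Minkowski's product inequality, and the multiplicativity of the determinant. That route, however, merely reproves \eqref{54} along the way and is unnecessary once \eqref{54} is in hand.
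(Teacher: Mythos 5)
Your proposal is correct and matches the paper's intent exactly: the paper states \eqref{55} without proof precisely because it is the $\tau=\tfrac12$ specialization of \eqref{54}, which is what you do. Your separate check of the endpoint $v=0$ (where $A!_0B=A\nabla_0B=B$ and the middle term vanishes) is a small point of care the paper silently omits, but it requires no further comment.
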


Note that the inequality \eqref{55} is the determinant version of the inequality \eqref{zuo2}. The inequalities \eqref{53} and \eqref{54} can be treated as two generalizations of \eqref{55}.


\bibliographystyle{amsplain}

\end{document}